 \documentclass[letterpaper,11pt]{article}

\usepackage{graphicx}
\usepackage{etoolbox}
\newtoggle{anonymous}
\togglefalse{anonymous}

\usepackage[usenames,dvipsnames,svgnames,table]{xcolor}
\usepackage[colorlinks=true,linkcolor=blue,citecolor=ForestGreen]{hyperref}


\usepackage{amsmath,amsthm,amssymb,enumerate}
\usepackage{scrextend}
\usepackage[utf8]{inputenc}
\usepackage[T1]{fontenc}
\usepackage{orcidlink}
\usepackage{paralist}

\usepackage{amsmath}
\usepackage{amssymb}
\usepackage{bm}
\usepackage{thmtools}
\usepackage{hyperref}
\usepackage[capitalize]{cleveref}
\usepackage{float}
\usepackage[color=green!40,textsize=small]{todonotes}

\usepackage{scalefnt}
\usepackage{pifont}
\usepackage{xspace}

\usepackage{float}
\newcommand{\xmark}{\ding{55}}
\newcommand{\ques}{{\fontfamily{cyklop}\selectfont \emph{?}}}

\newcommand{\dec}{decent\xspace}

\newcommand{\Slin}{moderate-growth\xspace}
\newcommand{\Slinn}{moderately-growing\xspace} 

\newcommand{\Smull}{sub-multiplicative\xspace}
\newcommand{\Smul}{sub-multiplicativity\xspace}

 \usepackage[margin=1in]{geometry}

\usepackage{setspace}
\setstretch{1}

\allowdisplaybreaks

\Crefname{figure}{Figure}{Figures}
\Crefname{claim}{Claim}{Claims}

\crefformat{equation}{\textup{#2(#1)#3}}
\crefrangeformat{equation}{\textup{#3(#1)#4--#5(#2)#6}}
\crefmultiformat{equation}{\textup{#2(#1)#3}}{ and \textup{#2(#1)#3}}
{, \textup{#2(#1)#3}}{, and \textup{#2(#1)#3}}
\crefrangemultiformat{equation}{\textup{#3(#1)#4--#5(#2)#6}}%
{ and \textup{#3(#1)#4--#5(#2)#6}}{, \textup{#3(#1)#4--#5(#2)#6}}{, and \textup{#3(#1)#4--#5(#2)#6}}

\Crefformat{equation}{Equation~\textup{#2(#1)#3}}
\Crefrangeformat{equation}{Equations~\textup{#3(#1)#4--#5(#2)#6}}
\Crefmultiformat{equation}{Equations~\textup{#2(#1)#3}}{ and \textup{#2(#1)#3}}
{, \textup{#2(#1)#3}}{, and \textup{#2(#1)#3}}
\Crefrangemultiformat{equation}{Equations~\textup{#3(#1)#4--#5(#2)#6}}%
{ and \textup{#3(#1)#4--#5(#2)#6}}{, \textup{#3(#1)#4--#5(#2)#6}}{, and \textup{#3(#1)#4--#5(#2)#6}}


\usepackage{tikz}
\tikzset{normalnode/.style={circle, draw, fill=black, inner sep=0, minimum width=1.5mm}}

\usetikzlibrary{fit}
\usetikzlibrary{shapes,arrows,shadows,positioning}
\usetikzlibrary{backgrounds}

\newcommand{\Nn}{\mathbb{N}}

\newcommand{\Rrr}{\mathbb{R}_{\geq 0}}

\newcommand{\Ac}{\mathcal{A}}
\newcommand{\Fc}{\mathcal{F}}
\newcommand{\Gc}{\mathcal{G}}
\newcommand{\Mc}{\mathcal{M}}

\newcommand{\Xc}{\mathcal{X}}
\newcommand{\Yc}{\mathcal{Y}}

\newtheorem{theorem}{Theorem}[section]
\newtheorem{question}{Question}
\newtheorem{proposition}[theorem]{Proposition}

\newtheorem{corollary}[theorem]{Corollary}
\newtheorem{conjecture}[theorem]{Conjecture}

\newtheorem{lemma}[theorem]{Lemma}

\theoremstyle{definition}
\newtheorem{definition}[theorem]{Definition}

\theoremstyle{remark}

\newcommand{\her}{\mathrm{Her}}
\newcommand{\mon}{\mathrm{Mon}}


\newcommand{\aut}{\mathrm{aut}}

\newcommand{\sub}{\#\mathrm{Sub}}
\newcommand{\emb}{\#\mathrm{Emb}}

\renewcommand{\leq}{\leqslant}
\renewcommand{\geq}{\geqslant}

\renewcommand{\Pr}[1]{\mathbb{P}\left[\,#1\,\right]}

\newcommand{\comment}[1]{}


\title{Tight Bounds on Adjacency Labels for Monotone Graph Classes}

\date{}

 \iftoggle{anonymous}{
 	\author{ }
 }{%
	\author{{\'E}douard Bonnet\thanks{Univ. Lyon, ENS de Lyon, UCBL, CNRS, LIP, France,  \texttt{edouard.bonnet@ens-lyon.fr}, \orcidlink{0000-0002-1653-5822}}    \and Julien Duron\thanks{Univ. Lyon, ENS de Lyon, UCBL, CNRS, LIP, France, \texttt{julien.duron@ens-lyon.fr}, \orcidlink{0009-0004-0925-9438}} \and    John Sylvester\thanks{Department of Computer Science, University of Liverpool, UK, \texttt{john.sylvester@liverpool.ac.uk}, \orcidlink{0000-0002-6543-2934}}
	\and
	Viktor Zamaraev\thanks{Department of Computer Science, University of Liverpool, UK, \texttt{viktor.zamaraev@liverpool.ac.uk}, \orcidlink{0000-0001-5755-4141}}
	\and Maksim Zhukovskii\thanks{Department of Computer Science, University of Sheffield, UK, \texttt{m.zhukovskii@sheffield.ac.uk}, \orcidlink{0000-0001-8763-9533} }}
}

\begin{document}


\maketitle

\begin{abstract}
  A class of graphs admits an adjacency labeling scheme of size $b(n)$, if the vertices in each of its $n$-vertex graphs can be assigned binary strings (called labels) of length $b(n)$ so that the adjacency of two vertices can be determined solely from their labels.
  
  We give tight bounds on the size of adjacency labels for every family of monotone (i.e., subgraph-closed) classes with a well-behaved growth function between $2^{O(n \log n)}$ and $2^{O(n^{2-\delta})}$ for any $\delta > 0$.
  Specifically, we show that for any function $f: \mathbb N \to \mathbb R$ satisfying $\log n \leqslant f(n) \leqslant n^{1-\delta}$ for any fixed $\delta > 0$, and some~sub-multiplicativity condition, there are monotone graph classes with growth $2^{O(nf(n))}$ that do not admit adjacency labels of size at most $f(n) \log n$.
  On the other hand, any such class does admit adjacency labels of size $O(f(n)\log n)$.
  Surprisingly this tight bound is a $\Theta(\log n)$ factor away from the information-theoretic bound of~$\Omega(f(n))$.
  The special case when $f = \log$ implies that the recently-refuted Implicit Graph Conjecture [Hatami and Hatami, FOCS 2022] also fails within monotone classes. 
  
We further show that the Implicit Graph Conjecture holds for all monotone \emph{small} classes. In other words, any monotone class with growth rate at most $n!\,c^n$ for some constant $c>0$, admits adjacency labels of information-theoretic order optimal size. In fact, we show a more general result that is of independent interest: any monotone small class of graphs has bounded degeneracy.
  We conjecture that the Implicit Graph Conjecture holds for all hereditary small classes.
\end{abstract}

	\section{Introduction}
	\label{sec:new-intro}
	A \emph{class} of graphs is a~set of graphs which is closed under isomorphism. 
	For a class of graphs $\Xc$ we denote by $\Xc_n$ the set of graphs in $\Xc$ with vertex set $[n]$.
	The function $n \mapsto |\Xc_n|$ is called the \emph{speed} of $\Xc$. 
	A \emph{coding} of graphs is a representation of graphs by words in the binary alphabet $\{ 0, 1 \}$.  One of the main considerations with graph representations is their succinctness; clearly, any representation of $n$-vertex graphs in a class $\Xc$ would require at least $\lceil \log |\Xc_n| \rceil$ bits for some graphs in $\Xc_n$.	
	
	Another consideration is whether the representation is global or local. Standard graph representations, such as adjacency matrix or adjacency lists, are examples of \emph{global} representations, where a graph is stored in a single data structure that needs to be accessed in order to query some information about the graph, e.g., adjacency between a pair of vertices. By contrast, in \emph{local} graph representations, the  encoding of a graph is distributed over its vertices in such a way that the queries can be answered by looking only into the local information associated with the vertices involved in the query. In this work we are concerned with local graph representations for adjacency queries, i.e., queries that given two vertices answer whether they are adjacent or not.
	
	Let $\Xc$ be a~class of graphs and $b : \Nn \rightarrow \Nn$ be a~function. A \emph{$b(n)$-bit adjacency labeling scheme} (or simply \emph{$b(n)$-bit labeling scheme})  for $\Xc$ is a~pair (encoder, decoder) of algorithms where for any $n$-vertex graph $G\in \Xc_n$ the encoder assigns binary strings, called \emph{labels}, of length $b(n)$ to the vertices of $G$ such that the adjacency between any pair of vertices can be inferred by the decoder only from their labels.
	We note that the decoder depends on the class $\Xc$, but not on the graph~$G$.
	The function~$b(\cdot)$ is the \emph{size} of the labeling scheme.
	Adjacency labeling schemes were introduced by Kannan, Naor, and Rudich \cite{KNR88,KNR92},
	and independently by Muller \cite{Muller88} in the late 1980s and have been actively studied since then.
	%
	Adjacency labeling schemes are closely related to induced universal graphs, which we will refer to simply as universal graphs.
	For a~function $u : \Nn \rightarrow \Nn$, a~\emph{universal graph sequence} or simply \emph{universal graph} of size $u(n)$ is a~sequence
	of graphs $(U_n)_{n\in \Nn}$ such that for every $n \in \Nn$ the graph $U_n$ has at most $u(n)$ vertices and every $n$-vertex graph in $\Xc$ is an induced subgraph of~$U_n$.
	It was observed in \cite{KNR92} that for a~class of graphs the existence of a $b(n)$-bit labeling scheme is equivalent to the existence of a~universal graph of size $2^{b(n)}$.
	
	The binary word, obtained by concatenating labels of the vertices of a graph $G \in \Xc_n$ assigned by an adjacency labeling scheme, uniquely determines graph $G$.
	Thus, a $b(n)$-bit labeling scheme cannot represent more than $2^{n b(n)}$ graphs on $n$ vertices, and therefore, if $\Xc$ admits a $b(n)$-bit labeling scheme, then $|\Xc_n| \leq 2^{n b(n)}$.
	This implies a lower bound of $\frac{\log |\Xc_n|}{n}$ on the size $b(n)$ of any adjacency labeling scheme for $\Xc$.
	A natural and important question is: which classes admit an adjacency labeling scheme of a size that matches this information-theoretic lower bound?
	
	We say that a graph class $\Xc$ admits an \emph{implicit representation}, if it admits an information-theoretic \emph{order optimal} adjacency labeling scheme, i.e.,\ if $\Xc$ has
	a $b(n)$-bit labeling scheme, where $b(n) = O(\log |\Xc_n|/n)$. 
	Equivalently, $\Xc$ admits an implicit representation if $\Xc$ has a universal graph of size $\exp(O(\log |\Xc_n|/n))$.
	For example, the class $\Ac$ of all graphs admits an implicit representation, because
        \[|\Ac_n| = 2^{\binom{n}{2}} = 2^{\Theta(n^2)}~\text{and}~b(n) = O\left(\frac{\log |\Ac_n|}{n}\right) = O(n),\]
        and one can easily design an $O(n)$-bit labeling scheme for $\Ac$, e.g., by assigning to each vertex of a graph an $(n + \lceil \log n \rceil)$-bit label consisting of the row in an adjacency matrix of the graph corresponding to the vertex and the index of that row; in fact, as we discuss below, the class of all graphs admits an asymptotically optimal $(1+o(1))n/2$-bit labeling scheme \cite{A17}.
	
	However, not every class admits an implicit representation. The following example is due to Muller \cite{Muller88} (see also \cite{Spinrad03}). Let $\Yc$ be the class of graphs in which the number of edges
	does not exceed the number of vertices. It is easy to estimate that $|\Yc_n| = 2^{O(n \log n)}$. To show that this class
	does not admit an implicit representation, consider an arbitrary $n$-vertex graph $G$. Obviously, $G$ does not necessarily belong to $\Yc$, but after adding $n^2-n$ isolated vertices to $G$, we obtain a graph $H$ on $N=n^2$ vertices that belongs to $\Yc$.
	Now, if an $O(\log n)$-bit labeling scheme for $\Yc$ existed, then the $O(\log N)$-bit adjacency labels for $H$ could be used
	as $O(\log n)$-bit adjacency labels for $G$. Since, $G$ was chosen arbitrarily, this is in contradiction with the lower bound of 
	$\frac{\log |\Ac_n|}{n} = \Omega(n)$ on the size of any labeling scheme for the class $\Ac$ of all graphs.
	
	The crucial property used in the above example is that by adding isolated vertices to a graph not in $\Yc$, one can obtain a graph in $\Yc$.
	Using more familiar terminology, one would say that class $\Yc$ is not \emph{hereditary}, i.e., it is not closed under vertex removal or, equivalently, under taking induced subgraphs. Many natural graph classes (e.g.,\ forests, planar graphs, bipartite graphs, geometric intersection graphs) are hereditary. 
	It turns out that finding a hereditary graph class that does not admit an implicit representation is a non-trivial question. The first instance of this question was asked by 	Kannan, Naor, and Rudich \cite{KNR88} for \textit{factorial classes} (i.e.,~graph classes $\mathcal{X}$ with the speed $|\mathcal{X}_n|=2^{O(n\log n )}$), which was later stated by Spinrad \cite{Spinrad03} in the form of a~conjecture, that became known as the \emph{Implicit Graph Conjecture}.
	
	\begin{labeling}{(\textit{IGC}):}
		\item[(\textit{IGC}):] Any hereditary graph class of at most factorial speed admits an $O(\log n)$-bit labeling scheme.
	\end{labeling}

	This question remained open for over 30 years until
	a recent breakthrough by Hatami and Hatami \cite{HH22}.
	They showed that, for any $\delta > 0$, there exists a~hereditary factorial class that does not admit a labeling scheme of size $n^{1/2-\delta}$, which is very far from the information-theoretic lower bound of $\Omega(\log n)$.
	This result leaves wide open the question of characterizing factorial hereditary graph classes that admit an implicit representation 
	(see e.g. \cite{HWZ22} for more discussion).

	Factorial hereditary classes form an important family, as many classes of theoretical or practical interest are factorial (e.g.,~forests, planar graphs, disk graphs, graphs of bounded twin-width). However, as was noted by Spinrad \cite{Spinrad03}, there is nothing that prevents one from considering implicit representability of other hereditary graph classes. Spinrad \cite{Spinrad03} raised this as the \textit{Generalized Implicit Graph Question}, which we restate using the terminology of our paper as follows.
	
	\begin{question}[\cite{Spinrad03}]\label{q:gen-igq}
		Which hereditary graph classes admit implicit representations?
	\end{question}

	The answer to this question is known for classes with $|\Xc_n| = 2^{\Omega(n^2)}$, and for \emph{subfactorial} graph classes, i.e., classes $\Xc$ with $|\Xc_n| = 2^{o(n\log n)}$. Indeed, for the latter classes, it is known that they have at most exponential speed, i.e., $|\Xc_n| = 2^{O(n)}$ \cite{Alekseev97,SZ94}, and also admit $O(1)$-bit labeling schemes \cite{Sch99}.
	For the former classes, the $O(n)$-bit labeling scheme mentioned above for the
	class $\Ac$ of all graphs is an order optimal labeling scheme.
	In fact, in this regime, \emph{information-theoretic asymptotically optimal} (up to the second-order term) labeling schemes are available. For the class of all graphs, such results (in the language of universal graphs) were available since 1965 \cite{moon65,AKTZ15,A17}. 
	For proper hereditary graph classes $\Xc$ with the speed $2^{\Omega(n^2)}$,
	by the Alekseev–Bollobás–Thomason theorem \cite{Ale92,BT95}, their speed is $|\Xc_n| = 2^{(1-1/k(\Xc))n^2/2 + o(n^2)}$, where $k(\Xc)$ is an integer greater than 1. Recently, Bonamy, Esperet, Groenland, and Scott showed \cite{BEGS21} that all such classes have asymptotically optimal adjacency labeling schemes of size $(1-1/k(\Xc))n/2 + o(n)$.
	
	For the classes in the intermediate range, i.e.,~the classes with the speed between $2^{\Omega(n\log n)}$ and $2^{o(n^2)}$ the picture is much less understood (see \cref{fig:WorldMap}).	
	Most known information is concentrated on the lower extreme of the range, i.e.,~around factorial speed, which was promoted by the Implicit Graph Conjecture.
	Factorial graph classes from certain families are known to admit implicit representations:
 	proper minor-closed graph classes \cite{GL07}, graph classes of bounded degeneracy (equivalently, of bounded arboricity) \cite{KNR88}, clique-width \cite{CV03,Spinrad03} (see also \cite{ban22}), and twin-width \cite{BGK22} all admit implicit representations.
	The only lower bound witnessing (non-constructively) factorial classes\footnote{This lower bound is sufficiently large to rule out the existence of implicit representations even for hereditary classes of size $2^{\Theta(n^{3/2-\delta})}$, for any fixed $0<\delta<1/2$.} that do not admit an implicit representation is the above-mentioned result by Hatami and Hatami \cite{HH22}. A notable family of 
	hereditary graph classes where \cref{q:gen-igq} remains open is the \emph{small} graph classes, i.e., classes $\Xc$ with $|\Xc_n| \leq n!\,c^n$ for some constant $c$. These classes encompass only the bottom part of the factorial layer
	and include proper minor-closed classes \cite{BlankenshipThesis,NSTW06}, and more generally, classes of bounded twin-width \cite{BGK22}. However, it is still  unknown if all such classes admit an implicit representation (see \cite{BDSZZ23} for more details on implicit representation of small classes).
	Alon showed \cite{Alon23} that every hereditary graph class $\Xc$ with $|\Xc_n| = 2^{o(n^2)}$ admits an $n^{1-\delta}$-bit labeling scheme for some $\delta > 0$.

\subsection{Our contribution} In this paper, we study \cref{q:gen-igq} for \emph{monotone} graph classes, i.e., graph classes that are closed under taking subgraphs. Monotone graph classes form a subfamily of hereditary graph classes. Together with some previous results mentioned in the introduction, the results of this paper give a~near complete resolution of \cref{q:gen-igq} for monotone classes. We state our results below.

	The \emph{degeneracy} of a graph $G$ is the minimum $d$ such that every subgraph of $G$
        has a vertex of degree at~most~$d$.
        We say that a class of graphs $\Xc$ has bounded 
        degeneracy, if there exists a~constant~$d$ such that the degeneracy of every graph $G \in \Xc$ is at~most~$d$; otherwise, we say that $\Xc$ has unbounded degeneracy.
        Our first main result shows that degeneracy is bounded for monotone small classes.

\begin{figure}[!t]
	\begin{center}
		
		\includegraphics[width=0.83\textwidth]{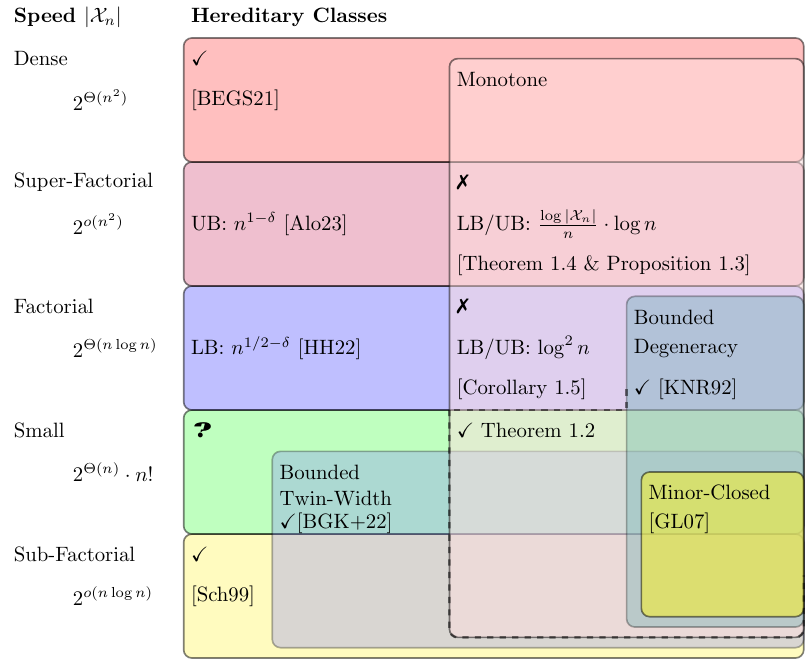}
	\end{center}
	\caption{A $\checkmark$ indicates that all classes of the given type have an implicit representation, a \xmark\xspace shows that they do not, and a $\ques$ signals that the question is open. A $\checkmark$ is inherited by every sub-region, a \xmark\xspace is inherited to the left of the marked region, and a $\ques$ only holds in that region. The upper and lower bounds (UB and LB respectively) are stated  up to constants which may depend on the class.
        The dashed extension of the \emph{bounded degeneracy} region illustrates its containment of~monotone small classes (\cref{cor:monSmall-ir}).}\label{fig:WorldMap}
\end{figure}

 \def\smallDgn{
	Let $\Xc$ be a monotone small class. Then, $\Xc$ has bounded degeneracy.}
\begin{theorem}\label{th:smallDegenerate}
	\smallDgn
\end{theorem}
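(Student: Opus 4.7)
The plan is to argue by contradiction: assume $\Xc$ is monotone and small but has unbounded degeneracy. Since $\Xc$ is closed under taking subgraphs, unbounded degeneracy forces that for every $d \ge 1$ there exists a graph $H_d \in \Xc$ with minimum degree $\delta(H_d) \ge d$; write $n_d = |V(H_d)|$ and $m_d = |E(H_d)| \ge n_d d / 2$. The basic counting observation is that, by monotonicity, all $2^{m_d}$ spanning subgraphs of $H_d$ lie in $\Xc_{n_d}$, so $|\Xc_{n_d}| \ge 2^{n_d d/2}$. Combined with smallness $|\Xc_n| \le n!\,c^n \le 2^{n \log n + O(n)}$, this already gives $d \le 2 \log n_d + O(1)$, equivalently $n_d \ge 2^{d/2 - O(1)}$. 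Taken alone this only yields an $O(\log n)$-bound on minimum degree in $n$-vertex members of $\Xc$, not the $O(1)$-bound required for bounded degeneracy, so the naive spanning-subgraph count must be strengthened.

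To close this gap I would aim to extract a \emph{dense} subgraph of $H_d$ on few vertices: show that any $H$ with $\delta(H) \ge d$ admits a subgraph $H' \subseteq H$ on $k = O(d)$ vertices with $|E(H')| = \Omega(k^2)$ edges. Since $\Xc$ is monotone, such an $H'$ lies in $\Xc_k$, and its $2^{\Omega(k^2)}$ spanning subgraphs force $|\Xc_k| \ge 2^{\Omega(k^2)}$, contradicting the smallness bound $|\Xc_k| \le k!\,c^k = 2^{O(k \log k)}$ once $d$ (and hence $k$) is sufficiently large. In short, a dense witness on $O(d)$ vertices directly violates smallness at scale $k$.

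The main obstacle is producing this dense $H'$: a random $d$-regular graph on $n = 2^{\Theta(d)}$ vertices, the natural test case, is pseudorandom and contains no dense substructure on any small vertex subset. The proof must therefore invoke either (a) a structural principle excluding such pseudorandom high-min-degree graphs from monotone small classes — plausibly by iterating the observation that smallness forces $\Xc$ to exclude $K_{t,t}$ as subgraph for some fixed $t$ (obtained via the same spanning-subgraph counting applied to $K_{t,t}$, which has $t^2$ edges on $2t$ vertices), thereby constraining neighbourhood structure in any $H_d \in \Xc$ via Kővári--Sós--Turán — or (b) a refined counting that sums spanning-subgraph contributions over all $n_d!/|\mathrm{Aut}(H_d)|$ labellings of $H_d$ and bounds the resulting overcount via injective-homomorphism counts, thereby upgrading the crude $2^{m_d}$ lower bound on $|\Xc_{n_d}|$ to something exceeding $n_d!\,c^{n_d}$.
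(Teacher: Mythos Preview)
Your option (b) is the paper's route, and you have correctly diagnosed why the naive $2^{m_d}$ count falls short and why the dense-subgraph extraction (a) is doomed by pseudorandom $d$-regular examples. The $K_{t,t}$-exclusion observation is true but does not help: K\H{o}v\'ari--S\'os--Tur\'an only yields $d = O(n^{1-1/t})$, which is weaker than the $d = O(\log n)$ you already have.

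The genuine gap is that your description of (b) stops at ``bound the overcount via injective-homomorphism counts'' without saying \emph{which} embeddings give a usable bound. Summing $n_d!/\aut(F)$ over spanning subgraphs $F$ of $H_d$ is useless unless you can uniformly bound $\aut(F)$, and generic spanning subgraphs (e.g.\ the empty one, or a disjoint union of many small components) can have $\aut(F)$ as large as $n_d!$. The paper's missing ingredient is a short structural lemma: any graph of minimum degree $d$ contains an \emph{induced} subgraph $H$ that still has minimum degree $\ge d$ \emph{and} admits a spanning tree $T$ of maximum degree $\le d$ (take a maximal subtree of max degree $\le d$; its vertex set induces such an $H$). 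Now restrict attention to the $2^{m-n+1}$ spanning subgraphs $F$ of $H$ that contain $T$. Since $T$ is a spanning tree of bounded max degree, $\aut(T) \le n d^{n-1}$; and $\aut(F) \le \emb(T \to F) \le \emb(T \to H) \le \binom{m}{n-1}\aut(T)$, both of which are $2^{o(m)}$ because $n \le 2m/d$. This simultaneously controls the number of isomorphism classes and the automorphism count, yielding $|\Xc_n| \ge n!\cdot 2^{\Omega(m)} \ge n!\cdot 2^{\Omega(nd)}$, which contradicts smallness once $d$ is large. Without the spanning-tree-of-bounded-degree step, your option (b) has no handle on $\aut(F)$.
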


\cref{th:smallDegenerate} has wider reaching implications than just labeling schemes, and is of independent interest. 
In the context of \cref{q:gen-igq}, we obtain the following result from \cref{th:smallDegenerate} and a~classical labeling scheme for classes of bounded degeneracy \cite{KNR88} (see also \cref{lem:degenlabel}).
 
\def\smallcor{ Any monotone small class admits an implicit representation. }

\begin{theorem}\label{cor:monSmall-ir}
	\smallcor	
\end{theorem}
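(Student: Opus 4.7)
The plan is to combine \cref{th:smallDegenerate} with the classical adjacency labeling scheme for bounded-degeneracy graph classes, and then to check that the resulting bound matches the information-theoretic lower bound forced by the small-class assumption. In more detail, I would proceed in three short steps.

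First, apply \cref{th:smallDegenerate} to the given monotone small class $\Xc$ to obtain a constant $d$ (depending only on $\Xc$) such that every $G \in \Xc$ has degeneracy at most $d$. Second, invoke the standard labeling scheme of Kannan, Naor, and Rudich for $d$-degenerate graphs (referenced in the excerpt as \cref{lem:degenlabel}): order the vertices of $G$ so that each vertex has at most $d$ later neighbors, and assign to each vertex $v$ the label consisting of its own $\lceil \log n \rceil$-bit identifier together with the at most $d$ identifiers of its later neighbors, padded to exactly $d$ slots. Two vertices are adjacent iff one of their identifiers appears among the listed neighbors of the other. This yields an $O(d \log n)$-bit labeling, and since $d$ is constant on $\Xc$, the scheme has size $O(\log n)$.

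Third, verify that $O(\log n)$ is order optimal in the information-theoretic sense for small classes. By the definition of a small class there exists a constant $c$ such that $|\Xc_n| \leq n!\, c^n$, hence
\[
\frac{\log |\Xc_n|}{n} \;\leq\; \frac{\log(n!\, c^n)}{n} \;=\; O(\log n).
\]
Consequently the $O(\log n)$-bit scheme constructed above satisfies $b(n) = O(\log |\Xc_n|/n)$, which is exactly the definition of an implicit representation given in the introduction.

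There is no real obstacle in this argument: the degeneracy bound from \cref{th:smallDegenerate} is doing all the work, and both the labeling construction and the information-theoretic check are immediate. The only thing to be careful about is confirming that the constant hidden in the degeneracy bound is allowed to depend on $\Xc$ (which it is, since the decoder in an adjacency labeling scheme is allowed to depend on $\Xc$), so that $O(d \log n) = O(\log n)$ is a legitimate simplification.
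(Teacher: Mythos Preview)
Your steps 1 and 2 are exactly the paper's proof: \cref{th:smallDegenerate} supplies bounded degeneracy, and \cref{lem:degenlabel} turns that into an $O(\log n)$-bit scheme. The paper's argument is literally one sentence citing these two results.

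Your step 3, however, contains a logical slip. From $\frac{\log|\Xc_n|}{n} \le O(\log n)$ and $b(n)=O(\log n)$ you \emph{cannot} conclude $b(n)=O\bigl(\frac{\log|\Xc_n|}{n}\bigr)$; the inequality points the wrong way. If, say, $|\Xc_n|=2^{O(n)}$, then $\frac{\log|\Xc_n|}{n}=O(1)$ and an $O(\log n)$-bit scheme is \emph{not} order optimal. What is actually needed is a \emph{lower} bound $\frac{\log|\Xc_n|}{n}=\Omega(\log n)$. This is supplied by the speed-gap results cited in the introduction: a hereditary class is either subfactorial---hence of at most exponential speed and already known to admit $O(1)$-bit labels by~\cite{Sch99}---or has speed at least $n^{(1-o(1))n}$, in which case $\frac{\log|\Xc_n|}{n}=\Theta(\log n)$ and your $O(\log n)$ scheme is indeed order optimal. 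The paper glosses over this dichotomy too, so the issue is minor, but as written your step 3 does not establish what it claims.
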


\noindent
This answers Question 1 from \cite{BDSZZ23} for monotone graph classes.

We now turn to monotone classes that are not small.
Our next result shows that any monotone class with non-decreasing speed admits a labeling scheme of size at most $O(\log n)$ away from the information-theoretic lower bound.

\def\monofactorial{		Let $f : \Rrr \rightarrow \Rrr$ be a non-decreasing function. Then, any monotone class of graphs $\mathcal{X}$ with speed $|\Xc_n| = 2^{O(nf(n))}$ admits an adjacency labeling scheme of size  $O( f(n) \log n)$.} 

\begin{proposition}\label{lem:monotone-factorial}
	\monofactorial
\end{proposition}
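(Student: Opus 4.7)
The plan is to exploit monotonicity to turn the speed bound into a degeneracy bound, and then invoke the standard labeling scheme for degenerate graphs.

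First I would observe that, for a monotone class $\Xc$, any graph $G \in \Xc_n$ has the property that all of its $2^{|E(G)|}$ labeled spanning subgraphs (obtained by deleting arbitrary subsets of edges from $G$) are again in $\Xc_n$. Hence
\[ 2^{|E(G)|} \leq |\Xc_n| = 2^{O(nf(n))}, \]
so $|E(G)| = O(nf(n))$.

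Next I would apply this observation not to $G$ itself but to every vertex-subgraph $H$ of $G$. If $H$ has $m$ vertices, then $H$ (with a relabeling of its vertex set to $[m]$) lies in $\Xc_m$, and by the same counting argument $|E(H)| = O(mf(m))$. Since $f$ is non-decreasing, $H$ has average degree $O(f(m)) \leq O(f(n))$, hence a vertex of degree at most $O(f(n))$. Iterating this (peel off a minimum-degree vertex and recurse) produces a vertex ordering $v_1,\ldots,v_n$ of $G$ in which every $v_i$ has at most $d = O(f(n))$ neighbors among $\{v_1,\ldots,v_{i-1}\}$; in other words, every graph in $\Xc_n$ has degeneracy $O(f(n))$.

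Finally I would invoke the classical $O(d \log n)$-bit labeling scheme for $d$-degenerate graphs due to Kannan, Naor and Rudich (\cref{lem:degenlabel} in this paper): label each vertex $v_i$ by the pair consisting of its index $i \in [n]$ (using $\lceil \log n \rceil$ bits) together with the list of indices of its at most $d$ back-neighbors in the degeneracy ordering (using a further $d \lceil \log n \rceil$ bits). Two vertices are adjacent iff one appears in the back-neighbor list of the other, which can be tested from the labels alone. With $d = O(f(n))$ this gives a total label length of $O(f(n)\log n)$, as required.

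I do not anticipate a serious obstacle: the only subtle point is to apply the speed bound to induced subgraphs of arbitrary size $m \leq n$ (not just to $n$-vertex graphs), which is why monotonicity under the non-decreasing assumption on $f$ is needed to uniformly bound the degeneracy by $O(f(n))$ rather than just bound the average degree of $G$ itself.
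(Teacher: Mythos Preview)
Your proposal is correct and matches the paper's proof essentially step for step: both use monotonicity plus the speed bound to show every $k$-vertex subgraph has $O(kf(k)) \le O(kf(n))$ edges, conclude $O(f(n))$-degeneracy, and then invoke \cref{lem:degenlabel}.
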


This upper bound is an easy consequence of an estimation of the number of edges in graphs from monotone classes combined with a standard labeling scheme for $k$-degenerate graphs \cite{KNR88}. 
 Our second main result shows that this upper bound is attained by some monotone classes.
 Before stating the result formally we must briefly introduce a family of non-decreasing functions we call ``decent''.  Roughly speaking, on some domain $[s,\infty)$, decent  functions are sub-multiplicative, i.e.,\ $f(xy) \leq f(x)f(y)$, and moderate-growing, that is $\log x \leq f(x) \leq x^{1-\delta}$ for some constant $\delta\in(0,1)$, see \cref{def:beaut} for the formal definition of decent functions.

 \def\lowerbound{
 	Let $f : \Rrr \rightarrow \Rrr$ be a \dec function. 
 	Then, there exists a monotone graph class $\mathcal{X}$ with speed $|\Xc_n| = 2^{O(nf(n))}$ 
 	that does not admit a universal graph of size at most $2^{f(n) \log n}$. 
 	Equivalently, $\Xc$ admits no adjacency labeling scheme of size at most $ f(n) \log n$.}
 \begin{theorem}\label{th:main}
 	\lowerbound
 \end{theorem}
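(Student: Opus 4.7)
The plan is a probabilistic construction of a monotone class $\mathcal{X}$ with speed $|\mathcal{X}_n| = 2^{O(nf(n))}$ that admits no universal graph of size $2^{f(n)\log n}$. The key difficulty is that the required lower bound beats the information-theoretic bound $\Omega(f(n))$ from speed by a factor $\log n$, so it cannot come from a pure counting argument on $\mathcal{X}_n$; it must exploit the structure of the monotone closure at a scale strictly larger than $n$. The padding observation provides the setup: if $\mathcal{X}$ is monotone and closed under adding isolated vertices, then for every $N \geqslant n$, a $b$-bit labeling scheme satisfies $|\mathcal{X}_n| \leqslant 2^{n \cdot b(N)}$, because the $n$ labels on the non-isolated vertices of $G \sqcup I_{N-n}$ already determine $G \in \mathcal{X}_n$.

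I would define $\mathcal{X}$ as the monotone closure of a random family $\mathcal{F} = \{H_m\}$ of sparse gadget graphs---each $H_m$ has $m$ vertices and $O(m f(m))$ edges---together with all their padded versions $H_m \sqcup I_k$. The gadgets are sampled (or built from algebraic incidence structures) so that, at a carefully chosen pair of matched scales $n \ll N$, their adjacency-matrix structure saturates the capacity of any universal graph of size $2^{f(N)\log N}$, in a sense analogous to the construction of Hatami and Hatami~\cite{HH22} adapted to the subgraph-closed setting. Verifying the speed bound $|\mathcal{X}_m| \leqslant 2^{O(m f(m))}$ for every $m$ reduces to enumerating subgraphs of $H_{m'} \sqcup I_k$ across all scales $m' \geqslant m$, using the edge bound $O(m' f(m'))$ and the sub-multiplicativity of $f$ to cap contributions from large $m'$.

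The main obstacle is the structural non-embeddability argument in the second step: producing a family $\mathcal{F}$ for which the failure of embedding into any universal graph of size $2^{f(n)\log n}$ can be certified beyond what cardinality alone yields. This is the technical heart of the proof, and is where the decency conditions on $f$ are used most crucially---sub-multiplicativity and moderate-growth $f(x) \leqslant x^{1-\delta}$ together permit the two scales $n$ and $N$ to be separated by a $\log n$-factor while still allowing the speed bound to hold after taking the monotone closure of $\mathcal{F}$. A careful probabilistic counting over the $2^{2^{2f(n)\log n}}$ candidate universal graphs, combined with a concentration argument showing that a typical sparse gadget $H_m$ fails to embed into any such universal graph with high probability, is the natural route, but matching the parameters so that the monotone closure retains speed $2^{O(n f(n))}$ is where the \dec assumption is essential.
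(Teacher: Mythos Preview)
Your plan has the right raw ingredients---random sparse graphs, a union bound over the $2^{u_n^2}$ candidate universal graphs, and a speed verification via sub-multiplicativity---but the central mechanism is misdiagnosed, and the two-scale padding detour is a wrong turn. The assertion that the bound ``cannot come from a pure counting argument on $\mathcal{X}_n$'' is exactly where you go astray: the paper's proof \emph{is} a counting argument at the single scale $n$, in the Hatami--Hatami style. The extra $\log n$ factor does not come from comparing two scales $n \ll N$; it comes from the fact that the number of \emph{unlabeled} $n$-vertex $(cf)$-good graphs is $2^{\Theta(nf(n)\log n)}$ whenever $f(n) \leqslant n^{1-\delta}$ (this is \cref{lem:superfactorial}, proved via $G(n,\gamma f(n)/n)$), whereas a single universal graph of size $u_n = 2^{f(n)\log n}$ contains at most $u_n^n = 2^{nf(n)\log n}$ induced $n$-vertex subgraphs. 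One then compares the number of size-$k_n$ \emph{collections} of $(cf)$-good graphs, which is at least $2^{(2-o(1))\,k_n\, nf(n)\log n}$, against the number of such collections representable by \emph{some} $u_n$-vertex graph, which is at most $2^{u_n^2}\cdot u_n^{nk_n} = 2^{(1+o(1))\,k_n\, nf(n)\log n}$ for $k_n = \lceil 2^{\sqrt{nf(n)}}\rceil$. Some collection $M_n$ is therefore uncovered, and $\mathcal{X} = \mon(\bigcup_n M_n)$ is the desired class.

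Two specific steps in your outline would not go through. First, a ``concentration argument showing that a typical sparse gadget $H_m$ fails to embed into any such universal graph'' cannot work: any \emph{single} $m$-vertex graph embeds as an induced subgraph of some $u_n$-vertex graph once $u_n \geqslant m$, so non-embeddability has to be a property of the whole collection, not of an individual gadget---hence the counting over collections above. Second, the padding inequality $|\mathcal{X}_n| \leqslant 2^{n\, b(N)}$ gives only $b(N) \geqslant \tfrac{1}{n}\log|\mathcal{X}_n|$, and since $|\mathcal{X}_n| = 2^{O(nf(n))}$ this is no stronger than the information-theoretic bound $\Omega(f(N))$ for non-decreasing $f$; it plays no role in the paper. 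Where you are right that sub-multiplicativity is essential is in bounding the speed of $\mon(\bigcup_n M_n)$: the paper handles this via the two-regime definition of $f$-good (the extra $1/\log k$ factor for $k \leqslant \sqrt{n}$), which forces all subgraphs inherited from $M_N$ with $N \geqslant n^2$ into a single fixed class of speed $2^{O(nf(n))}$ (\cref{lem:good-tiny}).
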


 	Theorem \ref{th:main} gives the existence of monotone classes requiring labels whose size is a $\log n$-factor above the information-theoretic lower bound. In particular, this shows that \cref{lem:monotone-factorial} is tight. 
A special case of \cref{th:main} (when $f(x) = \log x$) implies that the Implicit Graph Conjecture does not hold even for monotone graph classes. Combining this observation with \cref{lem:monotone-factorial} gives the following result. 

\begin{corollary}\label{cor:factorial}
	For any constant $c>0$, there are factorial monotone classes that do not admit a $(c \log^2 n)$-bit labeling scheme, while any factorial monotone class admits an $O(\log^2 n)$-bit labeling scheme. 
\end{corollary}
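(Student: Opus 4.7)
The plan is to obtain Corollary~\ref{cor:factorial} by specializing Proposition~\ref{lem:monotone-factorial} and Theorem~\ref{th:main} to the choice $f(n) \approx \log n$. Both directions of the statement should drop out with essentially no new work; the only piece that requires attention is verifying that $f(n) = c\log n$ satisfies the hypotheses of Theorem~\ref{th:main} when $c > 1$.

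For the upper bound, I would apply Proposition~\ref{lem:monotone-factorial} with $f(n) = \log n$, which is non-decreasing. By definition, a factorial monotone class $\Xc$ has speed $|\Xc_n| = 2^{O(n\log n)} = 2^{O(nf(n))}$, so the proposition directly produces a labeling scheme of size $O(f(n)\log n) = O(\log^2 n)$. This handles the positive part of the corollary uniformly over all factorial monotone classes.

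For the lower bound, I would split on the value of $c$. If $c \leq 1$, a monotone class that admits no $\log^2 n$-bit labeling scheme a fortiori admits no $(c\log^2 n)$-bit scheme, so it suffices to produce one for $c = 1$. For $c \geq 1$, I would invoke Theorem~\ref{th:main} with $f(n) = c\log n$, after verifying that $c\log n$ is \dec. The \Slinn condition is immediate since $\log x \leq c\log x \leq x^{1-\delta}$ for any fixed $\delta \in (0,1)$ and all $x$ large enough. For \Smul, I would check that on a suitable domain $[s,\infty)$ (with $s$ depending on $c$) one has $c\log(xy) = c\log x + c\log y \leq (c\log x)(c\log y)$; this is equivalent to $\frac{1}{c\log x} + \frac{1}{c\log y} \leq 1$, which holds as soon as $x,y \geq e^{2/c}$. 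Theorem~\ref{th:main} then yields a monotone class with speed $2^{O(n \cdot c\log n)} = 2^{O(n\log n)}$—that is, factorial—that admits no labeling scheme of size $f(n)\log n = c\log^2 n$.

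There is no real obstacle: the corollary is a direct combination of the two preceding results, and the only nontrivial step is the routine verification that $c\log n$ meets the definition of a \dec function, which is essentially built into that definition.
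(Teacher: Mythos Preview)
Your proposal is correct and follows essentially the same approach as the paper, which simply states that the corollary follows by combining Proposition~\ref{lem:monotone-factorial} with Theorem~\ref{th:main} in the special case $f(x)=\log x$; you correctly fill in the detail that for $c\ge 1$ one actually applies Theorem~\ref{th:main} to $f(x)=c\log x$ (the paper notes just before Lemma~\ref{lem:manydecent} that $C\log x$ is \dec for every $C\ge 1$), and that the case $c\le 1$ reduces to $c=1$. One cosmetic point: since $\log$ denotes base-$2$ logarithm in the paper, the threshold in your \Smul check should read $x,y\ge 2^{2/c}$ rather than $e^{2/c}$, though this does not affect the argument.
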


This result (more generally Theorem \ref{th:main} and Proposition \ref{lem:monotone-factorial}) gives the first example of tight bounds for families of graph classes that do not admit information-theoretic order optimal adjacency labeling schemes.
Chandoo \cite{Chandoo23} observed that the proof of the refutation of the IGC by Hatami and Hatami \cite{HH22} implies that the family of factorial classes cannot be ``described'' by a~countable set of factorial classes. Using the same ideas, we establish the following result from our proof for monotone classes.  

\def\complex{Let $f:\mathbb{R}_{\geq 0}\rightarrow \mathbb{R}_{\geq 0}$ be any decent function, and $\mathbb{X}$ be any countable set of graph classes, each with speed at most $2^{nf(n)\log n}$. Then, there exists a monotone graph class $\mathcal{X}$ of speed $2^{O(nf(n))}$ such that there does not exist a $\mathcal{D} \in \mathbb{X}$ with $\mathcal{X}\subseteq  \mathcal{D}$.}
\begin{theorem}\label{Thm:complex} 	
	\complex
\end{theorem}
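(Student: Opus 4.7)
The plan is to adapt Chandoo's observation~\cite{Chandoo23}, originally made in the factorial regime, to the decent-$f$ monotone setting, with Theorem~\ref{th:main} playing the role that the Hatami--Hatami~\cite{HH22} refutation of the IGC plays in Chandoo's argument. At a high level, the same construction behind the lower bound of Theorem~\ref{th:main} produces, via an uncountable parameterisation, so many distinct monotone classes of speed $2^{O(nf(n))}$ that no countable list of cover candidates of speed at most $2^{nf(n)\log n}$ contains all of them.

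Concretely, I would first revisit the proof of Theorem~\ref{th:main}. That proof should exhibit an uncountable family $(\mathcal{X}_\alpha)_{\alpha\in A}$ of monotone classes of speed $2^{O(nf(n))}$, indexed by $\alpha\in A=\{0,1\}^{\mathbb{N}}$, where $\alpha$ controls a ``scale-$n$ generator graph'' $G^{(n)}_\alpha$ for each $n$ in an unbounded set of scales, and where the choices at different scales are essentially independent. Equip $A$ with the uniform product probability measure $\mu$. For each fixed $\mathcal{D}\in\mathbb{X}$, the event $E_\mathcal{D}:=\{\alpha:\mathcal{X}_\alpha\subseteq\mathcal{D}\}$ forces $G^{(n)}_\alpha\in\mathcal{D}_n$ at every such scale $n$; because $|\mathcal{D}_n|\leq 2^{nf(n)\log n}$ is small compared to the pool used in the construction, one gets an event probability $p_n<1$ at each relevant scale, and independence across scales yields
\[
\mu(E_\mathcal{D})\leq \prod_{n} p_n = 0.
\]
By countable additivity, $\mu\bigl(\bigcup_{\mathcal{D}\in\mathbb{X}}E_\mathcal{D}\bigr)=0$, so almost every $\alpha\in A$ produces a monotone class $\mathcal{X}=\mathcal{X}_\alpha$ of speed $2^{O(nf(n))}$ that is not contained in any $\mathcal{D}\in\mathbb{X}$, giving the theorem.

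\textbf{Main obstacle.} The delicate point is obtaining the per-scale bound $\Pr_\mu[G^{(n)}_\alpha\in\mathcal{D}_n]\leq p_n<1$ from the construction of Theorem~\ref{th:main}. The cover-speed bound $2^{nf(n)\log n}$ can be substantially larger than what a single-scale generator pool inside any particular envelope of speed $2^{O(nf(n))}$ naively provides, so the bound cannot in general be won at one scale alone; the argument must exploit independence across many scales via a Borel--Cantelli-style estimate. Verifying that the construction of Theorem~\ref{th:main} furnishes the required independent-scales structure, and, if it does not out of the box, boosting it by interleaving independent copies on a sparse subsequence of scales (without breaking the monotone speed bound $2^{O(nf(n))}$), is the crux of the argument.
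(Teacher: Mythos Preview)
Your measure-theoretic plan could be made to work, but it is both more elaborate than needed and hinges on a ``main obstacle'' that is not actually there. The key quantitative point you are missing is that the generator pool is already large enough at a single scale. From \cref{lem:superfactorial} (applied with $\gamma=4/\delta$, exactly as in the proof of \cref{th:main}), the set $\mathcal{G}_n$ of unlabeled $(cf)$-good $n$-vertex graphs satisfies
\[
|\mathcal{G}_n|\;\geq\;2^{(2-o(1))\,n f(n)\log n}\;>\;2^{n f(n)\log n}\;\geq\;|\mathcal{D}_n|
\]
for every $\mathcal{D}\in\mathbb{X}$ and all large $n$. The pool is \emph{not} constrained to lie ``inside any particular envelope of speed $2^{O(nf(n))}$''; it is the full collection of good graphs, which is of order $2^{\Theta(nf(n)\log n)}$. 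What is small is any individual class $\mathcal{X}_\alpha$, because it is built by selecting only one (or $k_n$) graph(s) per scale from this big pool, and \cref{lem:good-tiny} then bounds its speed. Once you see this, your per-scale probability satisfies $p_n\leq|\mathcal{D}_n|/|\mathcal{G}_n|\leq 2^{-(1-o(1))\,n f(n)\log n}\to 0$, so no Borel--Cantelli boosting or interleaving on sparse subsequences is needed.

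The paper exploits exactly this observation, but via a direct diagonalisation rather than a probability measure: enumerate $\mathbb{X}=(\mathcal{D}^i)_{i\in\mathbb{N}}$, fix $N_0$ large enough that the counting inequality above holds for all $n\geq N_0$, and for each $i$ choose a set $M_i\subseteq\mathcal{G}_{i+N_0}$ of size $k_{i+N_0}$ with $M_i\not\subseteq\mathcal{D}^i$ (such $M_i$ exists since $\binom{|\mathcal{G}_n|}{k_n}>\binom{|\mathcal{D}^i_n|}{k_n}$). Then $\mathcal{X}:=\mon\bigl(\bigcup_i M_i\bigr)$ has speed $2^{O(nf(n))}$ by \cref{lem:good-tiny}, and $\mathcal{X}\not\subseteq\mathcal{D}^i$ for every $i$ because $M_i\subseteq\mathcal{X}$ but $M_i\not\subseteq\mathcal{D}^i$. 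This is the whole argument; no measure, no independence across scales, no product estimate. Your probabilistic repackaging is correct in spirit, but buys nothing here and obscures the simple counting-plus-diagonalisation core.
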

This shows that monotone classes are complex in the sense that they cannot be covered by a~countably infinite family of classes growing slightly faster, even if these classes are not restricted to being hereditary (thus, also to being monotone).

	\subsection{Proof outline and techniques}\label{sec:techneques}
	\label{sec:proof-outline}

	\paragraph{Monotone small classes have bounded degeneracy and implicit representations.}

        We establish \cref{th:smallDegenerate} in the contrapositive: if a monotone class $\Xc$ has unbounded degeneracy, then it is not small.
        To prove this we establish the following two intermediate steps: 
\begin{enumerate}
\item  We first show that every graph of minimum degree~$d$ admits an induced subgraph with minimum~degree at~least~$d$ that has a~spanning tree of \emph{maximum} degree at~most~$d$.  
  
\item Next, we show that if $\Gc = \mon(\{G\})$, where $G$ is any graph with minimum degree $d \geq 1000$, then there exists a $k \in \mathbb{N}$ such that $|\Gc_k| \geq k! \cdot 2^{kd/3}.$
  
          To achieve this, we start from an induced graph $H$ of~$G$ satisfying the previous item, with $k = |V(H)|$ and $m = |E(H)|$.
          Graph $H$ can be shown to have at~least $2^{4m/5}$ pairwise non-isomorphic spanning subgraphs, due to its large minimum degree.
          Let us denote by $\Fc$ this set of subgraphs.
          Crucially each member of $\Fc$ has at~most $2^{m/10}$ automorphisms, due to the spanning tree of bounded maximum degree.
          We conclude since $|\Gc_k| \geq \sum_{F \in \Fc} \frac{k!}{\aut(F)}$.   
\end{enumerate}
Finally, to show the contrapositive of \cref{th:smallDegenerate}, we consider an arbitrary monotone class $\mathcal{X}$ of unbounded degeneracy and assume that for some constant $c$ we have $|\Xc_n| \leq n!\,c^n$ for every $n \in \mathbb{N}$.
Since $\Xc$ has unbounded degeneracy, it contains a graph $G$ with arbitrarily large minimum degree~$d$.
If we take $d$ suitably large, then applying Step 2 to such a graph yields a~contradiction with the assumption of smallness of $\Xc$.

Having established \cref{th:smallDegenerate}, any small monotone class $\mathcal{X}$ has bounded degeneracy. Thus, \cref{cor:monSmall-ir} follows by applying a classical $O(\log n)$-bit labeling scheme for classes of bounded degeneracy \cite{KNR92}, see \cref{lem:degenlabel} for a description of this scheme.  
	\paragraph{Monotone classes that do not admit implicit representations.}
Recall that, roughly speaking\footnote{The formal definition of decent (\cref{def:beaut}) is more general and depends on three parameters $\delta, C,s$. For this proof sketch it suffices to work with the simplified (informal) definition above which only has one parameter $\delta$.}, a function $f : \Rrr \rightarrow \Rrr$ is decent if $\log x \leq f(x) \leq x^{1-\delta}$ for some constant $\delta\in(0,1)$, and $f$ is \Smull, i.e., $f(xy)\leq f(x)\cdot f(y)$, for all $x,y$ in the domain. 	Our approach is inspired by the refutation of the IGC by Hatami and Hatami \cite{HH22}. Namely, for any decent function  $f$, we expose so many \emph{monotone} classes of speed $2^{n f(n)}$ that there are not enough universal graphs of size $2^{f(n) \log n}$ to capture all of them.
	The approach involves several key ingredients:
	\begin{enumerate}
		\item Estimation of the number of sets of graphs of fixed cardinality representable by universal graphs. A~set of graphs $\Mc$ is \emph{representable} by a~universal graph $U$, if every graph in $\Mc$ is an induced subgraph of $U$.
		A~direct estimation shows that the number of sets of cardinality $k_n := \lceil 2^{\sqrt{nf(n)}} \rceil $ of $n$-vertex graphs that are representable by a~$u_n$-vertex universal graph, with $u_n := 2^{ f(n)\log n}$, is at~most 
		\begin{equation}\label{eq:num-representable}
			2^{u_n^2} \cdot u_n^{nk_n} = 2^{2^{2 f(n) \log n} + k_n \cdot n f(n)\log n}.
		\end{equation}
		
		\item Notion of \emph{$f$-good graphs}. 
		We will construct our monotone classes of speed $2^{nf(n)}$ by taking the monotone closure of an appropriately chosen set of graphs.
		The monotonicity and the speed of target classes impose a~natural restriction on the number of edges in graphs that can be used in such constructions.
		To explain, let $\Xc$ be a~monotone class with $|\Xc_n| \leq 2^{nf(n)}$.
		Since $\Xc$ is closed under taking subgraphs,
		if $\Xc$ contains an $n$-vertex graph with $m$ edges, then $\Xc$ contains at least $2^m$ labeled $n$-vertex graphs. This, together with the speed assumption, imply that for any $G \in \Xc$ and $k$, every subgraph of $G$ on $k$ vertices contains at most $kf(k)$ edges.
		
		This restriction, however, is not strong enough for our purposes.
		Indeed, while each graph with the above property contributes to the monotone closure an appropriate number of subgraphs at every \emph{level} (i.e., on every number of vertices), we build our desired classes by taking the monotone closure of \emph{infinitely} many of such graphs, and this can result in some levels having too many graphs.
		To overcome this difficulty, we introduce the notion of \emph{$f$-good} graphs, which are  $n$-vertex graphs in which the number of edges in every $k$-vertex subgraph is at most $kf(k)$
		if $k > \sqrt{n}$, and at most $\frac{kf(k)}{\log k}$ if $2 \leq k \leq \sqrt{n}$. The latter condition ensures that if we take the monotone closure of a set of $f$-good graphs, then all sufficiently small subgraphs of any graph in this class belong to a \emph{fixed} monotone class of speed $2^{nf(n)}$. Namely, the class of all $n$-vertex graphs in which very $k$-vertex subgraph has at most $\frac{kf(k)}{\log k}$ edges for every $2 \leq k \leq n$.
		
		\item Construction of monotone classes of speed $2^{nf(n)}$ from sets of $f$-good graphs.
		We show that for any sequence $(\Mc_{n})_{n \in \Nn}$, where $\Mc_{n}$ is a~set of $f$-good $n$-vertex graphs of cardinality~$k_{n}$, the monotone closure $\mon(\cup_{n \in \Nn} \Mc_{n})$ has speed at most $2^{nf(n)}$.
		
		\item Lower bound on the number of sets of cardinality $k_n$ of $f$-good $n$-vertex graphs.
		We show that for any $\gamma>1$, there exists some $c:=c(\gamma,\delta)>0$ such that for every $n \in \Nn$ there are at least  $2^{(\gamma \delta/2-o(1))\cdot nf(n)\log n}$ many unlabeled $cf$-good $n$-vertex graphs. Thus, the number of sets of cardinality $k_n$ of $cf$-good $n$-vertex graphs is at least 
		\begin{equation}\label{eq:num-good-sets}
			2^{k_n \cdot (\gamma \delta/2-o(1))\cdot n f(n) \log n}.
		\end{equation}
		\label{itme:many-sets}
	\end{enumerate}
	
	By setting $\gamma = 4/\delta$ and recalling that $k_n = \lceil 2^{\sqrt{nf(n)}} \rceil$, 
	we show that \cref{eq:num-good-sets} is larger than \cref{eq:num-representable}. 
	Therefore, there exists a~monotone class $\mon(\cup_{n \in \Nn} \Mc_{n})$ of speed $2^{nf(n)}$ that is not representable by a~universal graph of size~$2^{f(n)\log n}$.
	
	\paragraph{Many~$f$-good graphs.}
	A core step in the above approach is to show that for any $\gamma>1$, there exists some $c:=c(\gamma,\delta)>0$ such that the number of $n$-vertex $cf$-good graphs grows as $2^{(\gamma \delta/2-o(1))\cdot nf(n)\log n}$. To do so, we show that a~random graph $G_n \sim G(n, \gamma f(n)/n)$ is $cf$-good with high probability (\emph{w.h.p.}). It is in this step that we really need to use the \Smul property of decent functions, as we need to relate the magnitude of $f$ at two different parts of its domain. 
	
In particular, to show that w.h.p.\ $G_n$ is $cf$-good, we apply a first moment bound to show there are no ``large'' $k$-vertex subgraphs of $G_n$ with more than $ckf(k)$ edges, and ``small'' ones with more than $ckf(k)/\log k$ edges. Observe that the number of edges $\xi$ in a given $k$-vertex subgraph has expectation $\binom{k}{2}\frac{\gamma f(n)}{n}$. Thus, for ``large'' subgraphs, the probability that $\xi$ is constant factor larger than $ckf(k)$ decays with exponent  $\propto - f(k)\cdot\ln \frac{nf(k)}{kf(n)} $ by the Chernoff bound. From this we see that unless $ f(k)/ f(n)  > k/ n$, then the bound fails. Sub-multiplicativity helps us here as it allows us to say $f(n) = f(k\cdot (n/k))\leq f(k)\cdot f(n/k) $, \Slin then bounds the term $f(n/k)$. A similar issue occurs for ``small'' subgraphs.
	
	From the explanation above it may seem that needing such tight control over the ratio of $f(k)$ to $f(n)$ for all $k\leq n$ is an artifact of our proof, however some ``smoothness'' condition on the function is necessary.   To see this, consider
	a function $f : \Nn \rightarrow \Rrr$ such that $f(n) = \log n$, if $n$ is odd, and $f(n) = \sqrt{n}$, if $n$ is even. Then, for any $c > 0$, and large enough even $n$, $G(n,f(n)/n)$ will not be $cf$-good as the restriction on the subgraphs with odd number of vertices is far too stringent. Sub-multiplicativity was the most natural and broad condition we could find to combat this issue, and we show in \cref{lem:manydecent} that many common functions growing at a suitable rate satisfy this. 
	
	It would be interesting to see if \Smul can be replaced with something more general. We also used \Smul in Step 3 above (which corresponds to \cref{lem:good-tiny}) to bound the speed of $\mon(\cup_{n \in \Nn} \Mc_{n})$, however it is possible that some less stringent property can be used there.

	\paragraph{A matching upper bound on the size of adjacency labels.} 
	We show that for any non-decreasing function $f : \Rrr \rightarrow \Rrr$, any monotone class
	with speed $2^{O(n f(n))}$ admits an $O(f(n)\log n)$-bit labeling scheme.
	This follows from an easy observation that any such class is $O(f(n))$-degenerate, followed by the same standard $O(k \log n)$-bit labeling scheme for $k$-degenerate graphs used to prove \cref{cor:monSmall-ir}.
	One consequence of this upper bound is that our result on the `$f$-goodness' of a random graph (\cref{th:Gnp-sparsness}) is tight: for any $p \in \omega(f(n)/n)$ and $c \geq 0$, a~random graph $G_n \sim G(n, p)$ is not $cf$-good w.h.p.

	\subsection{Discussion}\label{sec:discussion}
	A natural question arising from our work is to characterize monotone classes that admit an implicit representation.
        Motivated by the Implicit Graph Conjecture, of particular interest is the case of factorial classes.
	
	\begin{question}\label{q:monotone-igc}
		Which \emph{monotone} factorial graph classes admit an $O(\log n)$-bit labeling scheme?
	\end{question}
	
	\noindent
	An analogous question is completely understood for constant-size \emph{adjacency sketches} (a probabilistic version of adjacency labeling schemes) that were studied in \cite{FK09,Harms20,HWZ22}. The importance of constant-size adjacency sketches is that they can be derandomized to $O(\log n)$-bit adjacency labels \cite{Harms20,HWZ22}.
	Thus, if a class admits constant-size adjacency sketches, then
	it admits an $O(\log n)$-bit labeling scheme. Though, the converse is not always true. Esperet, Harms, and Kupavskii showed \cite{EHK22} that a monotone class admits constant-size adjacency sketches if and only if it has bounded degeneracy.
	This result may suggest that bounded degeneracy also characterizes monotone classes that admit $O(\log n)$-bit labeling schemes. This, however, is not the case, as the class of subgraphs of hypercubes is monotone, has unbounded degeneracy, and admits an $O(\log n)$-bit labeling scheme \cite{EHZ23}.
	
Recall that \cref{q:gen-igq} (first raised in \cite{Spinrad03}), asks which hereditary graph classes admit implicit representations.
	A prominent instance of \cref{q:gen-igq} is whether every hereditary small class admits an implicit representation. It was shown in \cite{BDSZZ23} that for any $\kappa>0$ there is a monotone small class which does not admit a ($\kappa \log n$)-bit labeling scheme; in particular, some monotone small classes admit no \emph{information-theoretic asymptotically optimal} labeling scheme. One of our main results (\cref{cor:monSmall-ir}) shows that every monotone class admits an \emph{information-theoretic order optimal} labeling scheme, i.e., an implicit representation. We conjecture that the same holds for all hereditary small classes.
	\begin{conjecture}[Small Implicit Graph Conjecture]\label{conj:sigc}
		Any hereditary small class admits an implicit representation. 
	\end{conjecture}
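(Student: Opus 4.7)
The plan is to mimic the strategy used to settle the monotone case (\cref{th:smallDegenerate} and \cref{cor:monSmall-ir}): identify a structural parameter $\pi$ which (i)~must be bounded on every hereditary small class and (ii)~already implies the existence of an $O(\log n)$-bit adjacency labeling scheme. For monotone small classes this parameter is degeneracy, which however cannot be bounded on hereditary small classes in general, since the class of all complete graphs is hereditary and small. The natural replacement is \emph{twin-width}, known both to imply smallness and to admit implicit representations. Thus a strong form of the plan is to prove that every hereditary small class has bounded twin-width, itself a well-known open conjecture. A softer version is to look for an intermediate parameter, e.g.\ containment in the closure of a bounded-degeneracy class under complementation and bounded-size modifications, or some variant of ``flip-width''/VC-dimension together with a smallness-compatible compactness argument.

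To attack the problem directly I would try to adapt the counting argument behind \cref{th:smallDegenerate}. That argument extracts from a single graph $G$ of minimum degree $d$ an induced subgraph $H$ with many pairwise non-isomorphic spanning subgraphs and few automorphisms each, and then appeals to monotonicity to pour all $2^{\Omega(|E(H)|)}$ spanning subgraphs of $H$ into the class. In the hereditary setting one must instead produce many pairwise non-isomorphic \emph{induced} subgraphs of $H$ with few automorphisms. A natural route is a Ramsey-type trichotomy: any graph in a hereditary class is either close to a clique, close to an independent set, or contains a large induced ``balanced'' subgraph in which both the graph and its complement have large minimum degree. The two extreme cases are harmless, since the classes of all cliques and all co-cliques admit trivial labeling schemes; the work then concentrates on the balanced case, where one could hope to simultaneously run the spanning-tree/automorphism bound of \cref{th:smallDegenerate} on $H$ and on $\overline{H}$, using the large minimum degree in both to obtain enough non-isomorphic induced substructures that already embed into the hereditary closure.

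The main obstacle is that hereditary closure is dramatically weaker than monotone closure, and the quantitative inequality such an approach really needs --- that a ``balanced'' $k$-vertex graph contributes at least $k! \cdot 2^{\Omega(k)}$ labeled members to the hereditary class it generates --- does not follow from edge counts, in contrast to the spanning-subgraph count used in the monotone proof. Enumerating non-isomorphic induced subgraphs of a single graph up to automorphism is notoriously subtle and is closely tied to long-standing enumeration problems for hereditary classes, so it is unclear whether such a bound even holds in the required form. Overcoming this is likely to demand either a genuinely new structural theorem for small hereditary classes (perhaps establishing bounded twin-width) or an adjacency labeling technique engineered for hereditary rather than monotone closure, one that exploits the smallness assumption $|\Xc_n| \leq n!\,c^n$ directly rather than through a reduction to a sparse-graph labeling scheme.
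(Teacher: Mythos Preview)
The statement you were asked to prove is not a theorem in the paper but an explicitly open \emph{conjecture}; the paper offers no proof and indeed flags it as such in the discussion (\cref{conj:sigc}). There is therefore nothing to compare your attempt against.

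Your write-up is not a proof either, and you are clearly aware of this: it is a research plan that correctly identifies why the monotone argument breaks (complete graphs are hereditary and small but have unbounded degeneracy), correctly names the natural strengthening (bounded twin-width for small hereditary classes, itself a well-known open problem), and correctly isolates the fundamental obstacle to a direct counting attack, namely that hereditary closure does not let you pour in $2^{\Omega(m)}$ spanning subgraphs, so the automorphism-vs-subgraph-count inequality driving \cref{lem:minDegSubgraphsSpanTree} has no obvious analogue for induced subgraphs. Your assessment that resolving this likely requires either a new structural theorem (possibly the small-implies-bounded-twin-width conjecture) or a labeling technique that uses smallness directly is consistent with the paper's own stance.

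In short: there is no gap to name because you have not claimed a proof, and the paper has none to offer. Your discussion of the difficulties is accurate and well-informed; just be explicit at the outset that you are sketching an approach to an open problem rather than proving a theorem.
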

\Cref{conj:sigc} is also known to hold for classes of bounded twin-width~\cite{BGK22}.

 We conclude this discussion with a more technical (yet natural) question of whether the conditions (\Slin and \Smul) of ``decent'' can be relaxed. Due to the discussion under the heading ``Many $f$-good graphs'' in \cref{sec:techneques}, the \Slin condition is essentially necessary, and  if one is to follow our method, some notion of global ``smoothness'' is required to prove \cref{th:Gnp-sparsness}.	
 	However, it is not so clear to what extent the \Smul condition is necessary to achieve the required ``smoothness''.

	\subsection{Organization}	
	The rest of the paper is organized as follows. 
	In Section \ref{sec:basics}, we cover some common notation and definitions. 
	In \cref{sec:phase-transition}, we prove our first main result, namely, that any monotone small class has bounded degeneracy, and therefore admits an implicit representation.
	\cref{sec:ingredients,sec:monotone} are devoted to our second main result, namely, tight bounds on the size of adjacency labeling schemes for monotone classes.
	In \cref{sec:GoodDecent} we introduce two key concepts used in the proofs. Firstly, we give the notion of \emph{$f$-good} graphs, which are the building blocks for the monotone classes used to prove the lower bounds. Secondly, we formally define \emph{decent} functions which describe the speeds of these monotone graph classes, before concluding \cref{sec:GoodDecent} with some natural examples of decent functions. In \cref{sec:Gnp-subgraphs}, we prove a result about random graphs which is the main technical ingredient of our lower bounds.
	In \cref{sec:monotone}, we establish the lower and upper bounds on labeling schemes for monotone classes, along with the result on the complexity of monotone graph classes.

	\section{Standard definitions and notation }\label{sec:basics}
    We let $[n]$  denote the set $\{1, \dots, n\}$ of natural numbers, and use $\ln^c x$ as a~shorthand for $(\ln x)^c$.  We take $\Rrr$ to denote the set of non-negative real numbers.    
    We use $X\sim \mathcal{D}$ to denote that the random variable $X$ has distribution $\mathcal{D}$. We say that a sequence of events $(A_n)$ holds \emph{with high probability (w.h.p.)} if $\Pr{A_n}\rightarrow 1$ as $n\rightarrow \infty$.
        
	\paragraph{Graphs.} We consider finite undirected graphs, without loops or multiple edges.
        Given a~graph~$G$, we write~$V(G)$ for its vertex set, and~$E(G)$ for~its edge set.	
	A graph $H$ is a~\emph{subgraph} of~$G$ if $V(H) \subseteq V(G)$ and $E(H) \subseteq E(G)$.
        Thus, $H$ can be obtained from $G$ by vertex and edge deletions.
	The graph $H$ is an \emph{induced subgraph} of $G$ if $V(H) \subseteq V(G)$,
	and~$E(H)$ consists exactly of the edges in~$E(G)$ with both endpoints in~$V(H)$.
        In that case, $H$ can be obtained from $G$ by vertex deletions only. In the usual way, for a~set of vertices $U\subseteq V(G)$, we denote by $G[U]$ the induced subgraph of $G$ with the set of vertices $U$.
        We denote by $e(G)$ the number of edges in $G$
        
        When we refer to an $n$-vertex graph $G$ as \emph{labeled}, we mean that the vertex set of $G$
        is $[n]$, and we distinguish two different labeled graphs even if they are isomorphic.
        In contrast, if we refer to $G$ as \emph{unlabeled} graph, its vertices are indistinguishable and
        two isomorphic graphs correspond to the same unlabeled graph.

	\paragraph{Graph classes.} A~graph class is \emph{hereditary} if it is closed under taking induced subgraphs, and it is \emph{monotone} if it closed under taking subgraphs.
	For a~set $\Xc$ of graphs we let $\her(\Xc)$ denote the hereditary closure of $\Xc$, i.e., the inclusion-wise minimal hereditary class that contains $\Xc$; and $\mon(\Xc)$ denote the monotone closure of $\Xc$, i.e., the minimal monotone class that contains $\Xc$.

	\section{Monotone small classes admit an implicit representation}
	\label{sec:phase-transition}
	
	In this section we show that any monotone small class admits an implicit representation. 
	To~do~so, we first establish (Theorem \ref{th:smallDegenerate}) that any small monotone classes has bounded degeneracy.
        This result has a broader scope than just implicit representations, and is of independent interest.
        For example, it generalizes the fact that monotone classes of bounded twin-width have bounded degeneracy~\cite[$(iv) \Rightarrow (iii)$ in Theorem 2.12]{BGK22}.
	The labeling scheme for monotone small classes then follows from a classical labeling scheme for graphs of bounded degeneracy, see~\cref{lem:degenlabel}. 
	
	We proceed with some notation and known auxiliary facts that we will employ in the proof. 
	Recall that for a class of graphs $\Xc$, we denote by $\Xc_n$ the set of graphs in $\Xc$ with vertex set $[n]$.
        We will denote by $\Xc_n^u$ the set of \emph{unlabeled} $n$-vertex graphs in $\Xc$, i.e., the set of isomorphism classes in~$\Xc_n$. 
	Observe that for an unlabeled $n$-vertex graph $G$ there are exactly $\frac{n!}{\aut(G)}$ labeled graphs isomorphic to $G$, where $\aut(G)$ is the order of the automorphism group of $G$.
        Thus we have
	\[
	|\Xc_n| = \sum_{G \in \Xc_n^u} \frac{n!}{\aut(G)}.
	\]
	
	Let $F$ be a spanning subgraph of a fixed labeled graph $G$.
        Thus, we recall, $F$ is defined by a~subset of $E(G)$.
	We denote by $\sub(F \rightarrow G)$ the number of subgraphs of $G$ isomorphic to $F$, and by $\emb(F \rightarrow G)$ the number of embeddings of $F$ into $G$, i.e., the number of permutations from~$\mathfrak S_n$ that map $F$ to an isomorphic copy of $F$ in $G$. 
	Thus, \[\emb(F \rightarrow G) = \sub(F \rightarrow G)\cdot\aut(F).\]
	We will use the following well known facts (see e.g. \cite{KLT02}).
	
	\begin{lemma}\label{lem:autGspanF}
		Let $F$ be a spanning subgraph of a graph $G$. Then 
		\[
			\aut(G) \leq \emb(F \rightarrow G) = \sub(F \rightarrow G)\cdot\aut(F).
		\]
	\end{lemma}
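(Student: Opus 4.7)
The plan is to prove the equality and the inequality separately, both by elementary counting arguments that directly unpack the definitions. The key idea is that both $\aut(G)$ and $\emb(F \to G)$ are sets of permutations of $[n]$, and one of them is contained in the other, while the embeddings naturally partition according to their image as a spanning subgraph of $G$.

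For the equality $\emb(F \rightarrow G) = \sub(F \rightarrow G)\cdot\aut(F)$, I would partition the set of embeddings of $F$ into $G$ according to their image. Concretely, every embedding $\pi \in \mathfrak S_n$ sends $F$ to a spanning subgraph $\pi(F)$ of $G$ isomorphic to $F$, and conversely each subgraph $F^*$ of $G$ isomorphic to $F$ is the image of at least one embedding. Now I would show that for a fixed such $F^*$, the set of embeddings $\pi$ with $\pi(F) = F^*$ has size exactly $|\aut(F)|$: choosing one such embedding $\pi_0$, every other has the form $\pi_0 \circ \alpha$ for a unique $\alpha \in \aut(F)$, since rearranging the vertices of the source $F$ by an automorphism keeps the image unchanged, and conversely if $\pi_0(F)=\pi(F)$ then $\pi_0^{-1}\pi$ is an automorphism of $F$. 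Summing over $F^*$ yields the equality.

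For the inequality $\aut(G) \leq \emb(F \rightarrow G)$, I would show that every automorphism $\pi$ of $G$ is itself an embedding of $F$ into $G$. Since $\pi$ permutes $V(G)=V(F)$ and maps edges of $G$ to edges of $G$, and $E(F)\subseteq E(G)$, the image $\pi(F)$ has edge set contained in $E(G)$; thus $\pi(F)$ is a spanning subgraph of $G$, and it is isomorphic to $F$ via $\pi$ itself. Hence $\pi$ qualifies as an embedding of $F$ into $G$, giving an injection $\aut(G) \hookrightarrow \{\text{embeddings}\}$ and therefore the claimed inequality.

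There is no real obstacle here: the only thing to watch is that the argument genuinely uses the hypothesis that $F$ is \emph{spanning}, which ensures $V(F)=V(G)$ so that permutations in $\mathfrak S_n$ act coherently on both, and that images of edges of $F$ under an automorphism of $G$ remain in $E(G)$. Both steps are essentially bookkeeping via orbit–stabilizer, and the full write-up should be only a few lines.
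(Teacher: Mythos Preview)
Your proposal is correct: the orbit--stabilizer style partition of embeddings by their image gives the equality, and the observation that every automorphism of $G$ is in particular an embedding of any spanning subgraph $F$ gives the inequality. The paper does not actually prove this lemma; it states it as a well-known fact with a reference, so your few-line argument is exactly the kind of standard justification the paper is appealing to.
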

	
	\begin{lemma}\label{lem:autGdeg}
		Let $G$ be a connected graph of maximum degree $\Delta$.
		Then 
		\[
		\aut(G) \leq n \cdot \Delta! \cdot (\Delta-1)^{n-\Delta-1} \leq n \Delta^{n-1}.
		\]
	\end{lemma}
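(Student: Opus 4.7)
The plan is to bound $\aut(G)$ by counting how many ways an automorphism can be extended along a breadth-first traversal of $G$. Since $\Delta$ is the maximum degree, I can fix a vertex $v \in V(G)$ with $\deg(v) = \Delta$, together with a BFS spanning tree $T$ rooted at $v$ with parent map $p$. Any automorphism $\phi$ is determined by its values on $V(G)$, and those values are constrained layer by layer by the BFS structure, so it suffices to count the possible extensions.

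First, there are at most $n$ choices for $\phi(v)$. Once $\phi(v)$ is fixed, $\phi$ must send $N(v)$ bijectively to $N(\phi(v))$, which gives at most $\Delta!$ choices for $\phi|_{N(v)}$ since $|N(v)| = \Delta$. Next, I would process the remaining $n - 1 - \Delta$ vertices in BFS order. For each such $u$, its BFS-parent $p(u)$ lies at distance at least $1$ from $v$, so in particular $p(u) \neq v$, and hence $p(u)$ itself has a BFS-parent $p(p(u))$ whose image under $\phi$ was assigned in an earlier step. Since $\phi(u)$ must be a neighbor of $\phi(p(u))$ distinct from $\phi(p(p(u)))$, there are at most $\deg(\phi(p(u))) - 1 \leq \Delta - 1$ choices. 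Multiplying these bounds yields $\aut(G) \leq n \cdot \Delta! \cdot (\Delta - 1)^{n - 1 - \Delta}$.

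The remaining inequality $n \cdot \Delta! \cdot (\Delta-1)^{n - \Delta - 1} \leq n \Delta^{n-1}$ is a routine estimate using $\Delta! \leq \Delta^{\Delta}$ and $(\Delta - 1)^{n - 1 - \Delta} \leq \Delta^{n - 1 - \Delta}$. I do not anticipate any genuine obstacle; the only point requiring care is the ``grandparent'' argument that saves one choice per deep vertex (reducing $\Delta$ to $\Delta - 1$), which is what prevents a much weaker bound like $n \cdot \Delta^{n}$ and depends crucially on the BFS ordering guaranteeing that $p(p(u))$ has already been processed before $u$.
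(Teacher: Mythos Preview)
Your argument is correct and is essentially the standard proof of this bound. The paper does not actually prove this lemma; it is stated as a known fact with a reference to~[KLT02], so there is nothing further to compare.
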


	We start with the following deceptively simple lemma.
        	
	\begin{lemma}\label{lem:minDegSubgraphs}
	  Let $G$ be a graph of minimum degree $d$.
          Then $G$ has an induced subgraph $H$ of minimum degree at~least~$d$ with a~spanning tree of \emph{maximum degree} at~most~$d$.
	\end{lemma}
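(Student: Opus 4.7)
The plan is to build $H$ together with its spanning tree by a single greedy process, driven by the minimum-degree hypothesis. Pick any vertex $v_1 \in V(G)$ and initialize $T$ as the tree with vertex set $\{v_1\}$ and no edges. Iteratively grow $T$ as follows: while there exists $v \in V(T)$ with $\deg_T(v) < d$ and some neighbor $w \in N_G(v) \setminus V(T)$, pick any such pair $(v,w)$ and add $w$ to $V(T)$ together with the edge $vw$ to $E(T)$. Since each step enlarges $V(T)$ by exactly one vertex, the process terminates after at most $|V(G)|$ steps. Set $H := G[V(T)]$.

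Two structural invariants are immediate from the construction. First, $T$ remains a tree throughout: it is connected because every new vertex is attached via an edge to the current $V(T)$, and acyclic because every new edge has one endpoint outside the current vertex set. Second, $T$ has maximum degree at most $d$: when an edge $vw$ is added, the only degree that increases is $\deg_T(v)$, which by the selection rule was strictly less than $d$ prior to the step, while $w$ is newly introduced with tree-degree $1$. Since $V(T) = V(H)$ and $T \subseteq H$, $T$ is therefore a spanning tree of $H$ of maximum degree at most $d$.

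The content of the lemma is thus to verify that $\delta(H) \geq d$, and here the termination criterion does exactly the work we need. Let $v \in V(T)$. If $\deg_T(v) = d$, then $v$ already has $d$ neighbors inside $V(T) = V(H)$, so $\deg_H(v) \geq d$. Otherwise $\deg_T(v) < d$, and the fact that the process has terminated forces $N_G(v) \subseteq V(T)$; combining this with $|N_G(v)| \geq \delta(G) \geq d$ yields $\deg_H(v) = |N_G(v)| \geq d$. Thus every vertex of $H$ has degree at least $d$ in $H$, as required.

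The argument is entirely self-contained, so I do not anticipate a serious obstacle. The only subtlety worth highlighting is that the two clauses of the termination condition correspond neatly to the two mechanisms by which a vertex can achieve $H$-degree at least $d$: either it accumulates $d$ tree-neighbors during the process, or it is ``trapped'' inside $V(T)$ along with all of its $G$-neighbors when the process halts. Balancing these two clauses is precisely why greedy growth with the single stopping rule ``$\deg_T(v)<d$ and $v$ has an outside neighbor'' delivers both properties simultaneously.
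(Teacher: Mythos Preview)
Your proof is correct and follows essentially the same approach as the paper's: the paper phrases the construction as taking an inclusion-wise maximal tree $T \subseteq G$ of maximum degree at most $d$, while you build such a $T$ explicitly by greedy growth, but the termination/maximality condition and the subsequent case split on whether $\deg_T(v)=d$ or $\deg_T(v)<d$ are identical.
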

	\begin{proof}
		Let $T$ be an inclusion-wise maximal tree among subgraphs of $G$ with maximum degree at~most~$d$.
                Let $H$ be $G[V(T)]$.		
		Note that any vertex $v$ of $H$ that has degree less than $d$ in $T$, has no neighbors among $V(G) \setminus V(H)$ in $G$, as otherwise $T$ could have been extended by	adding any neighbor of $v$ from $V(G) \setminus V(H)$, which would contradict the maximality of $T$. 
		Thus all neighbors of $v$ are in $V(H)$, which implies that the minimum degree of $H$ is at~least $d$. 
	\end{proof}

         We show next that if a~graph has large minimum degree and a~spanning tree of bounded maximum degree, then it contains many pairwise non-isomorphic spanning connected subgraphs with a~small number of automorphisms.
	
	\begin{lemma}\label{lem:minDegSubgraphsSpanTree}
	  Let $G$ be an $n$-vertex $m$-edge connected graph of minimum degree $d \geq 1000$, with a~spanning tree of maximum degree at most $d$.
          Then, $G$ contains
		at least $2^{4m/5}$ pairwise non-isomorphic spanning connected subgraphs $F$
		with $\aut(F) \leq 2^{m/10}$. Consequently, $\mon(\{G\})$ contains at~least $n! \cdot 2^{nd/3}$ graphs on vertex set $[n]$.
	\end{lemma}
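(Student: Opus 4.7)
The plan is to take $\mathcal{S} := \{F : V(F) = V(G),\ T \subseteq F \subseteq G\}$, the family of spanning subgraphs of $G$ containing the given spanning tree $T$. Any such $F$ is spanning and connected, and $|\mathcal{S}| = 2^{m-(n-1)}$. I aim to establish the automorphism bound and the isomorphism class count simultaneously, because both are controlled by the same underlying quantity attached to $T$.

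For the automorphism bound, since $T$ is a spanning subgraph of $F$, Lemma~\ref{lem:autGspanF} yields $\aut(F) \leq \sub(T \to F) \cdot \aut(T)$. By Lemma~\ref{lem:autGdeg} applied to $T$, we have $\aut(T) \leq nd^{n-1}$, and trivially $\sub(T \to F) \leq \binom{e(F)}{n-1} \leq \binom{m}{n-1}$, since any subgraph of $F$ isomorphic to $T$ is a spanning tree using $n-1$ edges of $F$. Setting $K := nd^{n-1}\binom{m}{n-1}$, every $F \in \mathcal{S}$ satisfies $\aut(F) \leq K$.

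The second key step is to show that the same $K$ bounds the number $N(F)$ of elements of $\mathcal{S}$ isomorphic to a given $F$. The pairs $(F', \phi)$ with $F' \in \mathcal{S}$, $F' \cong F$, and $\phi \colon F \to F'$ an isomorphism, of which there are $N(F)\cdot\aut(F)$, are in bijection with the bijections $\phi \colon V(F) \to V(G)$ satisfying both $\phi(F) \subseteq G$ and $\phi^{-1}(T) \subseteq F$. Dropping the first constraint, such a $\phi$ is determined by the pair $(T_0, \psi)$ where $T_0 = \phi^{-1}(T)$ is a spanning subtree of $F$ isomorphic to $T$ and $\psi = \phi|_{V(T_0)} \colon T_0 \to T$ is an isomorphism, so the count is at most $\sub(T \to F) \cdot \aut(T) \leq K$. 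We conclude $N(F)\cdot\aut(F) \leq K$, and since $\aut(F) \geq 1$, $N(F) \leq K$. Hence the number of isomorphism classes satisfies $|\mathcal{U}| \geq |\mathcal{S}|/K = 2^{m-n+1}/K$.

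The remaining step is purely numerical, which I expect to be the main obstacle: verifying, using $d \geq 1000$ and $m \geq nd/2$, both $\log K \leq m/10$ (so $\aut(F) \leq 2^{m/10}$) and $|\mathcal{S}|/K \geq 2^{9m/10-n+1} \geq 2^{4m/5}$, the latter needing $m/10 \geq n-1$, which follows from $m \geq 500n$. Unpacking $\log K \leq \log n + (n-1)\log d + (n-1)\log(em/(n-1))$ requires treating separately the sparse regime $m = \Theta(nd)$ and the dense regime $m = \Theta(n^2)$, with the constant $1000$ tuned to make both estimates go through. For the ``consequently'' clause, each of the $\geq 2^{4m/5}$ isomorphism classes contributes $n!/\aut(F) \geq n!/2^{m/10}$ labellings on $[n]$ to $\mon(\{G\})$, giving $|\mon(\{G\})_n| \geq n! \cdot 2^{7m/10} \geq n! \cdot 2^{7nd/20} \geq n! \cdot 2^{nd/3}$, using $7/20 > 1/3$.
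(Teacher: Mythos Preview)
Your proposal is correct and follows essentially the same approach as the paper: take all spanning subgraphs containing the given tree $T$, and control both $\aut(F)$ and the number of isomorphic copies by the single quantity $\sub(T\to G)\cdot\aut(T)$, bounded via $\binom{m}{n-1}$ and Lemma~\ref{lem:autGdeg}. One simplification: the numerical verification of $\log K \le m/10$ does not require a sparse/dense case split—just substitute $n \le 2m/d$ uniformly to get $\binom{m}{n-1}\le (ed/2)^{2m/d}$ and $nd^{n-1}\le 2^{(4m/d)\log d}$, and check that $d\ge 1000$ makes both exponents at most $m/20$.
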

	
	\begin{proof}
		Fix a spanning tree $T$ of $G$ of maximum degree at most $d$.
		Denote by $\Fc$ the family of all subgraphs of $G$ containing $T$. Then, 
		\[
		|\Fc| = 2^{m-n+1} \geq 2^{m - \frac{2m}{d}} \geq  2^{9m/10},
		\]
		where we used the fact that $n \leq 2m/d$ by the assumption on the minimum degree of $G$, and the assumption $d\geq 1000$.

		For a fixed graph $F \in \Fc$, we will now estimate the number $N_{\Fc}(F)$ of graphs in $\Fc$ that are isomorphic to $F$. This number is at most the number $\emb(T \rightarrow G) = \sub(T \rightarrow G)\cdot \aut(T)$ of embeddings of $T$ into $G$. The number $\sub(T \rightarrow G)$ of subgraphs of $G$ isomorphic to~$T$ is at most 
		\begin{equation*}
			\binom{m}{n-1} \leq \binom{m}{2m/d} \leq \left(\frac{ed}{2} \right)^{2m/d}\leq   2^{m/20},
		\end{equation*}
		where in the last inequality we used the assumption $d \geq 1000$.
		Recalling that the maximum degree of~$T$ is at most $d$ and using \cref{lem:autGdeg}, we conclude that $\aut(T) \leq n d^{n-1} \leq 2^{2n \log d}$.
		Consequently,
                \begin{equation}\label{eq:TtoG} 
			N_{\Fc}(F) \leq \emb(T \rightarrow G) \leq 2^{m/20}\cdot 2^{ 2n \log d} \leq 2^{m/20 + 2(2m/d)\log d } \leq 2^{m/10}, 
		\end{equation} where again we used $n\leq 2m/d$ and $d\geq 1000$. Note that the bound in \eqref{eq:TtoG} holds for any $F\in \mathcal{F}$. Thus, the number of pairwise non-isomorphic graphs in $\Fc$ is at least \[\frac{|\Fc|}{\max_{F\in \mathcal{F}}N_{\Fc}(F)} \geq 2^{9m/10}\cdot 2^{- m/10} = 2^{4m/5}.\]
		
		Furthermore, for any $F\in \mathcal{F}$, we have 
		\[
		\aut(F) \leq \sub(T \rightarrow F)\cdot\aut(T) \leq \sub(T \rightarrow G)\cdot\aut(T) = \emb(T \rightarrow G) \leq 2^{m/10}, 
		\]
		where we used \cref{lem:autGspanF}, the fact that $\sub(T \rightarrow F) \leq \sub(T \rightarrow G)$, and \eqref{eq:TtoG}.

         Finally, the number of graphs with vertex set $[n]$ isomorphic to a~subgraph of $G$ is at~least
		\[
		\sum_{F \in \Fc} \frac{n!}{\aut(F)} 
		\geq 2^{4m/5} \cdot \frac{n!}{2^{m/10}} 
		= n! \cdot 2^{7m/10} 
		\geq  n! \cdot 2^{dn/3},
                \]
                where in the last inequality we used $m\geq dn/2$. 
        \end{proof}

        We can now prove the main result of this section.
        
        {\renewcommand{\thetheorem}{\ref{th:smallDegenerate}}
		\begin{theorem}
			\smallDgn	
	\end{theorem}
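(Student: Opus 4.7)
The plan is to prove the contrapositive: any monotone class $\Xc$ of unbounded degeneracy fails to be small. The key tools are already in place; the proof amounts to combining \cref{lem:minDegSubgraphs} and \cref{lem:minDegSubgraphsSpanTree} and choosing the minimum degree parameter large enough relative to the putative smallness constant.

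\textbf{Setup.} Suppose for contradiction that $\Xc$ is monotone, of unbounded degeneracy, and small, so $|\Xc_n| \leq n!\,c^n$ for every $n\in\Nn$ and some fixed constant $c>0$. Fix an integer $d$, to be chosen later, satisfying $d \geq 1000$ and $2^{d/3} > c$.

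\textbf{Extracting a dense subgraph in $\Xc$.} Since $\Xc$ has unbounded degeneracy, it contains a graph $G$ which itself contains a subgraph $G'$ of minimum degree at least $d$. By monotonicity, $G' \in \Xc$. Apply \cref{lem:minDegSubgraphs} to $G'$ to obtain an induced subgraph $H$ of $G'$ with minimum degree at least $d$ and a spanning tree of maximum degree at most $d$. Again by monotonicity, $H\in \Xc$, so $\mon(\{H\}) \subseteq \Xc$.

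\textbf{Counting and deriving a contradiction.} Let $k = |V(H)|$. Note that $H$ has minimum degree at least $d\geq 1000$, is connected (it has a spanning tree), and satisfies the hypotheses of \cref{lem:minDegSubgraphsSpanTree}. That lemma then yields
\[
|\Xc_k| \;\geq\; |\mon(\{H\})_k| \;\geq\; k! \cdot 2^{kd/3}.
\]
Combined with the assumed bound $|\Xc_k|\leq k!\,c^k$, this forces $c^k \geq 2^{kd/3}$, i.e.\ $c \geq 2^{d/3}$, contradicting the choice $2^{d/3} > c$. Hence $\Xc$ cannot be small, and the contrapositive is established.

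\textbf{Expected obstacles.} There are essentially no technical obstacles remaining: the two preceding lemmas do the heavy lifting. The only points to verify carefully are (i)~that $\mon$-closure within $\Xc$ is indeed contained in $\Xc$, which is exactly monotonicity, and (ii)~that the single parameter $d$ can be chosen to depend only on the smallness constant $c$ and on the absolute threshold $1000$ required by \cref{lem:minDegSubgraphsSpanTree}, which is immediate since unbounded degeneracy lets us push the minimum degree to any prescribed level.
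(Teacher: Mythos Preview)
Your proof is correct and follows essentially the same route as the paper: both argue by contrapositive, choose $d$ large relative to the smallness constant $c$ and the threshold $1000$, extract via \cref{lem:minDegSubgraphs} a connected subgraph $H\in\Xc$ with the required spanning-tree property, and then invoke \cref{lem:minDegSubgraphsSpanTree} to force $|\Xc_k|\geq k!\cdot 2^{kd/3}>k!\,c^k$. The only cosmetic difference is that you spell out the intermediate passage through $G'$ and $H$ explicitly, whereas the paper compresses this into a single appeal to the two lemmas.
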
}
	\addtocounter{theorem}{-1}
	\begin{proof}
		To prove the theorem we will show the contrapositive, i.e., if a class $\Xc$ has unbounded degeneracy, then it is not small.
		Suppose towards a contradiction that $\Xc$ has unbounded degeneracy,
		but there exists a constant $c$, such that $|\Xc_n| \leq n! \cdot c^n$ holds for every $n \in \mathbb{N}$.
		
		Since $\Xc$ has unbounded degeneracy and is closed under taking subgraphs, for any positive $d$, the class $\Xc$ contains a~connected graph with minimum degree at least $d$.
        Fix any $d > \max\{ 1000, 3\log c \}$, and let $G \in \Xc$ be a connected graph of minimum degree at least $d$.
                Let $\Gc = \mon(\{G\})$. 
		By \cref{lem:minDegSubgraphs,lem:minDegSubgraphsSpanTree}, for some $n \in \mathbb{N}$ we have 
		\[
		|\Gc_n| \geq n! \cdot 2^{nd/3} > n! \cdot 2^{n\log c} = n! \cdot c^n,
		\]
		where the first strict inequality holds due to $d > 3 \log c$.
		Since $\Gc \subseteq \Xc$, this is in contradiction with the assumed upper bound on the number of labeled graphs in~$\Xc$.
		Thus $\Xc$ is not small.
	\end{proof}
		
        The previous result is of independent interest, and provides some structural insight on monotone small classes.
        To show that some property generally holds on small monotone classes, one can now use their degeneracy.
        We give the first such application of~\cref{th:smallDegenerate}. 	
	
	{\renewcommand{\thetheorem}{\ref{cor:monSmall-ir}}
		\begin{theorem}
			\smallcor	
		\end{theorem}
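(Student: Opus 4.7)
The plan is to combine Theorem~\ref{th:smallDegenerate} with the classical adjacency labeling scheme for bounded-degeneracy graph classes, \cref{lem:degenlabel}. By Theorem~\ref{th:smallDegenerate}, every monotone small class $\Xc$ has bounded degeneracy, so there exists a constant $d$ such that every $G \in \Xc$ is $d$-degenerate.

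Applying the Kannan--Naor--Rudich scheme (\cref{lem:degenlabel}) to this bounded-degeneracy class then yields labels of size $O(d \log n) = O(\log n)$. Concretely, for each $G \in \Xc_n$, one fixes a degeneracy ordering of $V(G)$ and labels each vertex by its index in the ordering ($\lceil \log n \rceil$ bits) together with the indices of its at most $d$ earlier neighbors ($d\lceil \log n \rceil$ bits); adjacency between two vertices is decoded by checking whether the index of the earlier one appears in the neighbor list stored at the later one.

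To conclude that this scheme is information-theoretic order optimal, i.e.\ of size $O(\log |\Xc_n|/n)$, I would distinguish by speed. Since $\Xc$ is small, $\log |\Xc_n|/n = O(\log n)$, so in the factorial regime (where $\log |\Xc_n|/n = \Theta(\log n)$) the above $O(\log n)$-bit scheme matches the information-theoretic lower bound up to a constant factor and is therefore order optimal. In the sub-factorial regime, the bound $|\Xc_n| = 2^{O(n)}$ from \cite{Alekseev97,SZ94} and Scheinerman's $O(1)$-bit scheme \cite{Sch99} already supply an order optimal implicit representation, which one uses instead of the degeneracy-based scheme. Either way, $\Xc$ admits an implicit representation.

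No substantive obstacle arises at this stage: the structural content of the corollary has already been captured by Theorem~\ref{th:smallDegenerate}, and the remainder is routine assembly of known labeling schemes.
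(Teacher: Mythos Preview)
Your proposal is correct and follows essentially the same approach as the paper: combine \cref{th:smallDegenerate} with the degeneracy labeling scheme of \cref{lem:degenlabel} to obtain $O(\log n)$-bit labels. Your additional case distinction for the subfactorial regime (invoking \cite{Alekseev97,SZ94,Sch99}) makes the ``order optimal'' part of the definition of implicit representation fully explicit, whereas the paper treats that regime as already settled in the introduction and simply states that \cref{cor:monSmall-ir} follows directly from \cref{th:smallDegenerate} and \cref{lem:degenlabel}.
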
}
		\addtocounter{theorem}{-1}

The relevance of \cref{th:smallDegenerate} to labeling schemes should be clear from the following folklore bound~\cite{KNR92}, which we recall for completeness.
	
	\begin{lemma}\label{lem:degenlabel}
		The class of $k$-degenerate graphs has a  $(k+1)\cdot\lceil\log n\rceil $-bit adjacency labeling scheme. 
	\end{lemma}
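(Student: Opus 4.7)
The plan is to use the classical degeneracy ordering. For any $k$-degenerate graph $G$ on $n$ vertices, one can construct a linear order $v_1 \prec v_2 \prec \cdots \prec v_n$ of $V(G)$ such that each $v_i$ has at most $k$ neighbors among $\{v_{i+1}, \ldots, v_n\}$. This is obtained by iteratively selecting a vertex of minimum degree in the current graph (which has degree at most $k$ by $k$-degeneracy applied to the current subgraph), placing it next in the order, and deleting it; the ``later neighbors'' of the chosen vertex are then exactly its neighbors at the moment of removal.

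For the encoder: fix such an ordering and identify each $v_i$ with its index $i$ encoded in $\lceil \log n\rceil$ bits. The label of $v_i$ consists of (i) the identifier of $v_i$, and (ii) the list of the identifiers of its later neighbors, padded out to length exactly $k$ by repeating $v_i$'s own identifier (a harmless placeholder, since no vertex is adjacent to itself). This yields a label of $(k+1)\lceil \log n\rceil$ bits per vertex, as claimed.

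For the decoder: given the labels of two vertices $u$ and $v$, extract their identifiers from the first $\lceil \log n\rceil$ bits and compare them. If $u$'s identifier is smaller, declare $u$ and $v$ adjacent if and only if $v$'s identifier appears in the neighbor list stored in $u$'s label; otherwise proceed symmetrically. Correctness is immediate from the defining property of the ordering: any edge between $v_i$ and $v_j$ with $i<j$ is recorded in the label of the earlier endpoint. There is no real obstacle here; the only points to articulate are the existence of the degeneracy ordering (a one-line induction) and the padding convention that fixes the label length at $(k+1)\lceil \log n\rceil$.
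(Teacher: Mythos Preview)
Your proof is correct and follows the same approach as the paper: build a degeneracy ordering, label each vertex by its index together with the indices of its at most $k$ later neighbors. You simply spell out a few details (the padding convention and the decoder) that the paper's one-paragraph proof leaves implicit.
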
 
	\begin{proof}
		For any $k$-degenerate graph $G$ on $n$-vertices, we first order vertices so that each vertex has at most $k$ neighbors appearing after it in the ordering. This can be done greedily since each subgraph has a vertex of degree at most $k$. One can then assign each vertex a label consisting of its place in the order, followed by the places of the at most $k$ neighbor vertices following it in the ordering.  
	\end{proof}
	
\cref{cor:monSmall-ir} follows directly from \cref{lem:degenlabel} and \cref{th:smallDegenerate}.

\section{Ingredients for the proof of the lower bound}
\label{sec:ingredients}
		
	This section contains many of the components needed to construct the classes used in the proof of the lower bound (\cref{th:main}). In \cref{sec:GoodDecent} we introduce several notions related to subgraph density, which are then applied to random graphs in \cref{sec:Gnp-subgraphs}. 
		\subsection{Good graphs and decent functions}\label{sec:GoodDecent}

		Our first definition describes graphs which do not have overly dense subgraphs. 
		\begin{definition}[$f$-good]\label{def:good}
			Let $f: \Rrr \rightarrow \Rrr$ be a function.
			An $n$-vertex graph $G$ is \emph{$f$-good} if the number of edges in any subgraph on $k$ vertices is bounded from above by 	
			\[\begin{cases}
				\frac{k\cdot f(k)}{\log k} & \text{ if }  2 \leq k \leq  \sqrt{n}   \\
				k\cdot f(k) & \text{ if }   \sqrt{n} < k \leq n  
			\end{cases}.\]  
		\end{definition}
		
		We observe that $f$-goodness is a monotone property, i.e.,\ if a graph $G$ is $f$-good, then so is any of its subgraphs.
		Indeed, moving the threshold (between the first and the second, more relaxed, upper bound) from $\sqrt n$ down to a~smaller value may only help in satisfying these bounds.
		
		The next definition gives a class of functions used to describe speeds of monotone classes, where, for such a function $f(n)$, we will consider classes of growth $2^{nf(n)}$. 
		\begin{definition}[$(\delta,C,s)$-\emph{\dec}]\label{def:beaut} For constants $\delta \in (0,1)$, $C\geq 1$ and $s\geq 2$, we say that a non-decreasing function $f :\Rrr  \rightarrow \Rrr$ is $(\delta,C,s)$-\emph{\dec} if the following properties hold
			\begin{description}
				\item[(Moderate-growth):] $\log x \leq f(x) \leq C\cdot x^{1 - \delta}$ holds for every $x \in [s,\infty)$, 
				\item[(Sub-multiplicativity):]  $f(xy) \leq C\cdot f(x)\cdot f(y)$ holds for any $x,y \in [s,\infty)$.
			\end{description}
		\end{definition}

		
		We say that a function $f $ is \textit{\dec} if there  exist some constants $\delta\in (0,1)$, $C\geq 1 $, and $s\geq 2$ such that $f$ is $(\delta,C,s)$-\dec. For any constant $C\geq 1$, the function $f(x)=C\log x$ is decent; this captures factorial growth. We now give some other natural examples of \dec functions. 
		
		\begin{lemma}\label{lem:manydecent}
			For any fixed $\alpha>0,\beta\geq 1 ,\gamma \geq 1 $ and $d\in (0,1)$, the following functions are \dec:
			\begin{enumerate}[(i)]
				\item  $f(x) = \alpha x^d$,
				\item $f(x) = \exp\left(\alpha \cdot  \ln^d x \right) $,
				\item $f(x) = \exp\left(\beta \cdot  \ln^\gamma(\log x) \right) $,
				\item $f(x) = \beta \cdot g(x)$, where $g(x)$ is \dec,
				\item $f(x) = g(x)\cdot h(x)$, where $g(x),h(x)$ are \dec and  $g(x)\cdot h(x)$ is \Slinn. 	
			\end{enumerate}
		\end{lemma}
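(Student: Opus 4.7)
The plan is to verify the two defining conditions of \dec (moderate-growth and sub-multiplicativity) separately for each of (i)--(v); every listed function is manifestly non-decreasing on a suitable half-line $[s, \infty)$, so only the two inequalities need checking. Items (i), (ii), (iv), (v) are routine, while (iii) is where the real work is.

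For (i), take $\delta := 1 - d$ so that $f(x) = \alpha x^{1 - \delta}$, and note $f(xy) = \alpha x^d y^d = (1/\alpha) f(x) f(y)$, so $C = \max(\alpha, 1/\alpha, 1)$ suffices. For (ii), moderate-growth follows from $\alpha \ln^d x = o(\ln x)$, and the concavity inequality $(u+v)^d \leq u^d + v^d$ for $d \in (0,1]$ applied to $u = \ln x, v = \ln y$ gives $f(xy) \leq f(x) f(y)$, so $C = 1$ works. For (iv), if $g$ is $(\delta, C', s)$-\dec and $\beta \geq 1$ then $f = \beta g$ satisfies $f(xy) = \beta g(xy) \leq \beta C' g(x) g(y) = (C'/\beta) f(x) f(y) \leq C' f(x) f(y)$, and moderate-growth is inherited. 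For (v), if $g, h$ are \dec with constants $(C_g, s_g), (C_h, s_h)$, then on $[\max(s_g, s_h), \infty)$ we simply multiply the two sub-multiplicativity inequalities:
\[
f(xy) = g(xy) h(xy) \leq C_g g(x) g(y) \cdot C_h h(x) h(y) = C_g C_h f(x) f(y),
\]
with moderate-growth taken as a hypothesis.

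The delicate case is (iii), $f(x) = \exp(\beta \ln^\gamma(\log x))$ with $\beta, \gamma \geq 1$. Moderate-growth is easy because $\ln^\gamma \log x = o(\ln x)$. For sub-multiplicativity, set $a := \ln \log x$ and $b := \ln \log y$ with $a \geq b$; then $\ln \log(xy) = \ln(e^a + e^b) = a + \epsilon$ with $\epsilon := \ln(1 + e^{b-a}) \in [0, \ln 2]$, and the goal becomes showing that $(a+\epsilon)^\gamma - a^\gamma - b^\gamma$ is bounded above by a constant depending only on $\gamma$ and $s$. The argument splits on the relative size of $b$ and $a$: if $b \geq a/2$, the mean value theorem gives $(a+\epsilon)^\gamma - a^\gamma \leq \gamma \ln 2 \cdot (a+\ln 2)^{\gamma-1}$, which for $a$ large is absorbed by $b^\gamma \geq (a/2)^\gamma$; if $b < a/2$, then $\epsilon \leq e^{b-a} \leq e^{-a/2}$, so $(a+\epsilon)^\gamma - a^\gamma = O(a^{\gamma-1} e^{-a/2}) \to 0$; the remaining regime with $a$ bounded is handled by compactness. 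Exponentiating by $\beta$ gives a uniform constant $C$.

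The main obstacle is precisely this case (iii). The naive bound $\ln(\log x + \log y) \leq \ln 2 + \max(\ln \log x, \ln \log y)$ is tight only when $a = b$, and used directly it would produce a multiplicative factor $2^{\gamma-1}$ on $\ln f(x) + \ln f(y)$ that cannot be absorbed into an additive $\ln C$ when $\gamma > 1$. The sharper estimate $\ln(1 + e^{-t}) \leq \min(\ln 2, e^{-t})$, combined with the case split above, is what lets the dangerous correction $\gamma \ln 2 \cdot a^{\gamma-1}$ be cancelled either by $b^\gamma \geq (a/2)^\gamma$ (when $a$ and $b$ are comparable) or by the exponentially small factor $\epsilon$ (when $a \gg b$).
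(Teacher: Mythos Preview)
Your argument is correct throughout, and for items (i), (ii), (iv), (v) it matches the paper's proof essentially line by line. The difference is entirely in (iii).

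You treat (iii) as the delicate case and attack it by writing $\ln\log(xy) = a + \epsilon$ with $\epsilon = \ln(1+e^{b-a})$, then splitting on whether $b \geq a/2$ or $b < a/2$ and using the mean value theorem together with the bound $\ln(1+t) \leq t$. This works and yields sub-multiplicativity with some constant $C = C(\gamma,s) > 1$.

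The paper dispatches (iii) in one stroke via the following observation: for $\gamma \geq 1$ the function $t \mapsto (\ln t)^\gamma / t$ is non-increasing on $[e^\gamma, \infty)$ (its derivative is $(\gamma - \ln t)(\ln t)^{\gamma-1}/t^2$). This immediately gives, for $u, v \geq e^\gamma$,
\[
\ln^\gamma(u+v) \;=\; u \cdot \frac{\ln^\gamma(u+v)}{u+v} + v \cdot \frac{\ln^\gamma(u+v)}{u+v} \;\leq\; u \cdot \frac{\ln^\gamma u}{u} + v \cdot \frac{\ln^\gamma v}{v} \;=\; \ln^\gamma u + \ln^\gamma v.
\]
Applying this with $u = \log x$, $v = \log y$ yields $f(xy) \leq f(x) f(y)$ directly, with constant $C = 1$, on $[2^{e^\gamma}, \infty)$.

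So your case analysis is correct but unnecessary: the sub-additivity of $\ln^\gamma$ on $[e^\gamma,\infty)$ absorbs both of your regimes at once and even improves the constant from $C(\gamma, s)$ to $1$. What your route buys is robustness --- it would adapt to more general functions of $\ln\log x$ than pure powers --- whereas the paper's trick leans on the specific monotonicity of $(\ln t)^\gamma / t$.
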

		\begin{proof}For $(i)$, if we set $s:= \left(\frac{2}{d\max\{\alpha, 1\}}\right)^{2/d}$, then we have \[f(s) = \alpha \cdot \left(\frac{2}{d\max\{\alpha, 1\}}\right)^{2} \geq \frac{2}{d}\cdot  \frac{2}{d\max\{\alpha, 1\}} \geq \frac{2}{d} \cdot \log \frac{2}{d\max\{\alpha, 1\}}  = \log s.  \] Furthermore, observe that $\frac{\alpha x^d}{\log x}$ is increasing for all $x> e^{1/d}$, and that $s>2^{2/d}>e^{1/d}$. Thus, $f$ has \Slin on $[s, \infty)$ with $C=\alpha$ and $\delta =1-d$. Observe also that $f(xy) = \frac{1}{\alpha}\cdot f(x)f(y)$, and thus  $f(x)=\alpha x^d $ is $\Big(1-d,\max\left\{\alpha ,\frac{1}{\alpha}\right\}, s\Big)$-\dec. 
			
			For $(ii)$, \Slin holds for $C=1$, any fixed $\delta \in(0,1)$, and sufficiently large $s\geq 2$. For $x,y\geq 0$ let $g_x(y) = (x+y)^d - x^d-y^d$ and observe that $g_x(0)=0$ and $g_x'(y) = d(x+y)^{d-1} - dy^{d-1}\leq 0 $.  Consequently, $g_x(y) \leq 0$ for all $x,y\geq 0$, or equivalently $(x+y)^d \leq x^d+y^d $. This implies that $f$ is \Smull as  
			\[f(xy) = \exp\left(\alpha (\ln x + \ln y)^d\right)\leq \exp\left(\alpha ((\ln x)^d + (\ln y)^d)\right) = f(x)\cdot f(y).\]

			For $(iii)$,  it will be useful to show that   \begin{equation}\label{eq:logsubadd} 
				\ln^\gamma(x+y)  \leq \ln^\gamma x  + \ln^\gamma y, \qquad \text{for all }x,y \in [e^{\gamma},\infty). 
			\end{equation}

			To prove \eqref{eq:logsubadd}, we first observe that the function $g(x)= \frac{\ln^\gamma x}{x}$ is non-increasing for $x\in [e^\gamma,\infty)$. This follows since  $g$ is differentiable when $x\neq 0$ and $g'(x) = \frac{(\gamma -\ln x )\ln^{\gamma -1 } x}{x^2} < 0 $ for all $x> e^\gamma>1$. Thus \eqref{eq:logsubadd} follows from this observation, since for any $x,y \in [e^\gamma,\infty)$ we have 
			\[\ln^\gamma(x+y) = x\cdot \frac{\ln^\gamma(x+y) }{x+y} + y\cdot \frac{\ln^\gamma(x+y) }{x+y} \leq  x\cdot \frac{\ln^\gamma x }{x} + y\cdot \frac{\ln^\gamma y}{y}= \ln^\gamma x + \ln^\gamma y. \]   
			We now see that $f$ is \Smull for any $x,y \in [2^{e^\gamma},\infty)$ as by \eqref{eq:logsubadd} we have 
			\[f(xy) =  \exp\left(\beta\cdot \ln^\gamma(\log x +\log y) \right)\leq \exp\left(\beta \cdot (\ln^\gamma(\log x)+ \ln^\gamma(\log y) ) \right)=f(x)\cdot f(y).\]   Since $\gamma \geq 1$ and $\beta\geq 1$, $f$ is also \Slinn for a sufficiently large $s$.

			For $(iv)$, if $g$ is $(\delta_g,C_g,s_g)$-\dec, then it is easy to check that $\beta g$ is $(\delta_g,\beta C_g,s_g)$-\dec.

			For $(v)$, let $g$ be $(\delta_g,C_g,s_g)$-\dec and $h$ be $(\delta_h,C_h,s_h)$-\dec, and $f(x):=g(x)\cdot h(x)$. As $\log x \leq f(x) \leq C'x^{1-\delta'}$ for some $\delta' \in (0,1),C'>0 $, and $s'\geq 2$, by assumption, it remains to show \Smul. For any $x,y \in [\max\{s_g,s_h\}, \infty)$  we have
			\[f(xy)=g(xy)\cdot h(xy)\leq C_g g(x)g(y)\cdot C_h h(x)h(y) \leq C_g\cdot C_h\cdot  f(x)f(y),\]  and thus $f$ is $(\delta', \max\{C',C_g\cdot C_h\},\max\{s',s_g,s_h\})$-\dec.
		\end{proof}

	\subsection{Growth of the number of edges in subgraphs of random graphs}
	\label{sec:Gnp-subgraphs}
 The aim of this Section is to show that there are many graphs which are suitable for building the classes we need to prove \cref{th:main}.  We will achieve this using random graphs, where $G(n,p)$ denotes the distribution on $n$-vertex graphs where each edge is included independently with probability $p$, see (for example) \cite{FriezeKaronski}. Our main result shows that random graphs are suitable with high probably.

	\begin{theorem}\label{th:Gnp-sparsness}
	Let $f : \Rrr \rightarrow \Rrr$ be $(\delta,C,s)$-\dec  for some constants $\delta \in (0,1)$, $C\geq 1$, and $s\geq 2$. Then, for any fixed $ \gamma > 1$, there exists $c:= c(\delta,C,s,\gamma)>0$ such that, for large $n$,
		\[
			\mathbb{P}\left[\, G(n, \gamma  f(n)/n)\text{ is not }(c f)\text{-good } \right] \leq n^{-2}.
		\]

	\end{theorem}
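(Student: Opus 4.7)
The plan is to apply a first-moment argument: for each $k \in \{2,\ldots,n\}$, I would union-bound over the $\binom{n}{k}$ subsets of size $k$ the probability that the induced subgraph has more edges than allowed. Denote by $b_k$ the $(cf)$-goodness threshold, i.e.,\ $b_k = ckf(k)/\log k$ for $2 \leq k \leq \sqrt n$ and $b_k = ckf(k)$ for $\sqrt n < k \leq n$. For a fixed $k$-subset $S$, the number of induced edges is binomial with parameters $\binom{k}{2}$ and $p = \gamma f(n)/n$, so the standard tail bound $\Pr{\mathrm{Bin}(N,p) \geq t} \leq (eNp/t)^{t}$ (valid for $t \geq Np$) gives
\[
\binom{n}{k}\cdot \Pr{e(G_n[S]) \geq b_k} \;\leq\; \Bigl(\tfrac{en}{k}\Bigr)^{\!k}\!\left(\tfrac{e\binom{k}{2}p}{b_k}\right)^{\!b_k}.
\]
The goal is to choose $c$ large enough that the sum of the right-hand side over $k \in \{2,\ldots,n\}$ is at most $n^{-2}$.

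The crucial estimate is the Chernoff ratio $r_k := \binom{k}{2}p/b_k$. In the bulk regime $s \leq k \leq n/s$, \Smul yields $f(n) \leq C f(k) f(n/k)$, and then the \Slin bound $f(n/k) \leq C(n/k)^{1-\delta}$ gives
\[
r_k \;\leq\; \tfrac{\gamma}{2c}\cdot\tfrac{kf(n)}{nf(k)} \;\leq\; \tfrac{\gamma C^{2}}{2c}\cdot(k/n)^{\delta}
\]
for $k > \sqrt n$, and with an extra $\log k$ factor when $k \leq \sqrt n$. Taking logarithms in the displayed inequality above, the resulting exponent (for $k > \sqrt n$; the other case is analogous) is at most
\[
k + k\log(n/k) + ckf(k)\Bigl[\log\bigl(\tfrac{e\gamma C^{2}}{2c}\bigr) - \delta\log(n/k)\Bigr].
\]
Since $f(k) \geq \log k \geq 1$ for $k \geq s$, the term $ckf(k)\delta\log(n/k)$ dominates $k\log(n/k)$ as soon as $c\delta\geq 2$, and for $c$ sufficiently large in terms of $\delta, C, s, \gamma$ each summand is at most $\exp(-\Omega(ckf(k)\log(n/k)))$. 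Summing over $s \leq k \leq n/2$ is then routine.

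The extreme ranges require small adjustments. For $2 \leq k < s$ there are only $O(1)$ values of $k$; here $\Ex{e(G_n[S])} = O(f(n)/n) = O(n^{-\delta})$, so a crude first-moment bound using $\binom{n}{k} = O(n^{k})$ suffices provided $c$ is chosen large enough that $b_k \geq (k+3)/\delta$. For $n/2 < k \leq n$, writing $j = n-k$ one has $\binom{n}{k} \leq n^{j}$ and $\binom{k}{2}p \leq \binom{n}{2}p = O(nf(n))$, so $r_k \leq 2/c$ and Chernoff yields $\exp(-\Omega(cnf(n)\log c))$, which absorbs $n^{n/2}$ because $f(n) \geq \log n$.

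The main obstacle is balancing the two opposing factors $(n/k)^{k}$ from the union bound and $(k/n)^{\delta b_k}$ from Chernoff. The hypothesis that $f$ is \Smull is exactly what reconciles these: without a quantitative relationship between $f(n)$ and $f(k)$, the expected number of edges in a $k$-subset could outgrow $kf(k)$ and no constant $c$ would salvage the argument. The proof thus hinges on the inequality $f(n)/f(k) \leq C^{2}(n/k)^{1-\delta}$, which is the only place \Smul is genuinely used, and it is the reason the statement asks for a \dec function rather than an arbitrary non-decreasing one.
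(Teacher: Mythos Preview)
Your proposal is correct and follows essentially the same approach as the paper: a first-moment union bound over $k$-subsets combined with a Chernoff tail estimate, with \Smul providing the pivotal inequality $f(n)/f(k) \leq C^{2}(n/k)^{1-\delta}$ that makes the Chernoff ratio $r_k$ small. The paper's case split differs only cosmetically---it handles all $\sqrt{n}<k\leq n$ in one stroke via the monotonicity trick $f(n/k)\leq f(sn/k)$ (so no separate $k>n/2$ range), disposes of $k<s$ deterministically by taking $c\geq\binom{s}{2}$, and for $s\leq k\leq\sqrt{n}$ uses a product decomposition $n=\tfrac{n^{2/3}}{(\log k)^{1/\delta}}\cdot n^{1/3}(\log k)^{1/\delta}$ to get a clean $n^{\delta/3}$ bound on $1+a$, whereas your direct estimate $r_k=O((k/n)^{\delta}\log k)$ works equally well.
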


To prove this we will utilize the following version of the Chernoff bound (see \cite[Theorem A.1.15]{ProbMethod}), where $\mathrm{Bin}(N,p)$ denotes the binomial distribution with parameters $N$ and $p$. 
\begin{lemma}[Chernoff bound]\label{lem:Chernoff}
	Let $\xi\sim\mathrm{Bin}(N,p)$, $\mu=Np$, and  $a,t>0$. Then, 
	\begin{equation*}  \mathbb{P}(\xi>(1+a)\mu)\leq\left(\frac{e^a}{(1+a)^{1+a}}\right)^{\mu}\leq  \exp \left(-(1+a)\mu\cdot \ln  \frac{1+a}{e}\right).
	\end{equation*} 
\end{lemma}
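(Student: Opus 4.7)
The plan is to prove the Chernoff bound by the standard Cramér exponential-moment method. For any parameter $t>0$, applying Markov's inequality to the random variable $e^{t\xi}$ gives
\[
\mathbb{P}(\xi > (1+a)\mu) \;=\; \mathbb{P}(e^{t\xi} > e^{t(1+a)\mu}) \;\leq\; e^{-t(1+a)\mu}\,\mathbb{E}[e^{t\xi}].
\]
Since $\xi$ is a sum of $N$ independent Bernoulli$(p)$ random variables, the moment generating function factorizes as $\mathbb{E}[e^{t\xi}] = (1-p+pe^t)^N$. Using the elementary inequality $1+x\leq e^x$ with $x = p(e^t-1)$, this is at most $e^{Np(e^t-1)} = e^{\mu(e^t-1)}$.

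Combining, for every $t>0$,
\[
\mathbb{P}(\xi > (1+a)\mu) \;\leq\; \exp\bigl(\mu(e^t-1) - t(1+a)\mu\bigr).
\]
The next step is to optimize the right-hand side over $t>0$. Differentiating the exponent in $t$ and setting the derivative to zero yields $e^t = 1+a$, i.e.\ $t = \ln(1+a)$, which is positive because $a>0$. Substituting this choice gives exponent $\mu a - (1+a)\mu\ln(1+a)$, and exponentiating produces $\bigl(e^a/(1+a)^{1+a}\bigr)^{\mu}$, establishing the first inequality of the lemma.

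For the second inequality, rewrite
\[
\left(\frac{e^a}{(1+a)^{1+a}}\right)^{\mu} = \exp\bigl(\mu[a - (1+a)\ln(1+a)]\bigr),
\]
and note that the trivial bound $a \leq 1+a$ implies
\[
a - (1+a)\ln(1+a) \;\leq\; (1+a) - (1+a)\ln(1+a) \;=\; -(1+a)\ln\!\frac{1+a}{e}.
\]
Multiplying by $\mu>0$ and exponentiating yields the stated upper bound. The only mildly subtle step is the optimization over $t$, but this is a one-line calculus exercise; the remainder is entirely routine, and I anticipate no real obstacles. (The parameter $t$ appearing in the hypothesis but not in the conclusion is likely a vestigial typo: the variable $t$ is purely an internal optimization variable in the proof.)
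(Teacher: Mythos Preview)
Your proof is correct and is the standard Cram\'er--Chernoff argument. The paper itself does not prove this lemma; it simply cites it from Alon and Spencer's \emph{The Probabilistic Method} (Theorem A.1.15), so there is no in-paper proof to compare against. Your derivation, including the second inequality via $a\leq 1+a$, is exactly the textbook route, and your remark that the parameter $t$ in the hypothesis is vestigial is also accurate.
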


	\begin{proof}[Proof of \cref{th:Gnp-sparsness}] 	Let $p := p(n) = \gamma  f(n)/n$, and let $c_1,c_2$ be sufficiently large constants (depending on $\gamma$) fixed later. Let $\mathcal{E}_{1,k}$ (respectively $\mathcal{E}_{2,k}$) be the event that there are no subgraphs of size $k$ with more than $c_1 kf(k)/\log k$ edges (respectively $c_2 kf(k)$ edges). Observe that if $c=\max\{c_1, c_2, \binom{s}{2} \}$, then 
		\begin{equation}\label{eq:events} \left\{ G(n, p)\text{ is not } (c f)\text{-good}  \right\}\subseteq \left(\bigcup_{k= s}^{\lfloor \sqrt n \rfloor  } \neg \mathcal{E}_{1,k}\right) \cup \left(\bigcup_{k= \lfloor \sqrt n \rfloor +1   }^n \neg \mathcal{E}_{2,k}\right). \end{equation}

	Let  $k$ denote the number of vertices in a subgraph, and thus $\xi\sim\mathrm{Bin}\left({k\choose 2},p\right)$ denotes the number of edges in a given $k$-vertex subgraph.  The expectation of $\xi$ is  
	\begin{equation*} 
		\mu:={k\choose 2}p = \frac{\gamma }{2}\cdot \frac{k(k-1)f(n)}{n}.
	\end{equation*} 
	On the other hand, the number of ways to select a $k$-vertex subgraph is  
\begin{equation}\label{eq:nchoosek}
	{n\choose k}\leq\left(\frac{en}{k}\right)^k=\exp\left(k\ln\frac{n}{k}+k\right) \leq \exp\left(2k\ln n\right).\end{equation}
Our strategy will be to bound the probability of the events on the right-hand side of \eqref{eq:events} using the union and Chernoff bounds.

We begin by considering events of the form $\mathcal{E}_{2,k}$ and thus can assume that $\lfloor \sqrt{n}\rfloor +1 \leq k\leq n  $. Observe that since $f$ is \Smull, non-decreasing,  and \Slinn, we have 
\begin{equation}\label{eq:subaddf}
	\frac{f(k)}{ f(n)}  =  \frac{f(k)}{ f\left(\frac{n}{k}\cdot  k\right)}  \geq   \frac{f(k)}{ C\cdot f( \frac{n}{k})\cdot f(k)}  \geq   \frac{f(k)}{ C\cdot f( \frac{sn}{k})\cdot f(k)}  \geq \frac{f(k)}{ C^2\cdot ( \frac{sn}{k})^{1-\delta} \cdot f(k)} \geq \frac{k}{C^2s\cdot n}  
	.\end{equation}If we now fix \begin{equation}\label{eq:consstc2} c_2  = C^2s\cdot  e^2 \cdot \gamma >6 ,\end{equation} then by \eqref{eq:subaddf} we have 
\begin{equation}\label{eq:keyGnp} \frac{2c_2nf(k)}{e\gamma(k-1)f(n)} =  \frac{ 2C^2 s e  \cdot nf(k)}{(k-1)f(n)} \geq  \frac{ 2e k}{k-1} > e.   \end{equation}
So, applying Chernoff bound (\cref{lem:Chernoff})  with $1+a= \frac{c_2 k f(k)}{\mu} = \frac{2c_2nf(k)}{\gamma(k-1)f(n)} $ gives
\begin{align}\label{eq:chb}
\mathbb{P}(\xi>c_2 kf(k))&\leq\exp \left(-(1+a)\mu\cdot \ln  \frac{1+a}{e}\right) \notag \\
&=  \exp\left(-c_2 k f(k)\cdot\ln \frac{2c_2nf(k)}{e\gamma(k-1)f(n)}  \right) \notag \\
&\stackrel{\eqref{eq:keyGnp}}{\leq} \exp\left(-c_2 k f(k)   \right)\notag\\
&\stackrel{\eqref{eq:consstc2}}{\leq} \exp\left(-6 k f(k)   \right). \end{align}

Thus, by \eqref{eq:nchoosek}, \eqref{eq:chb}, the union bound, and as $f(k) \geq \log k > \ln k$, we have
\begin{equation} \label{eq:union1}
	\mathbb{P}\left(\bigcup_{k = \lfloor \sqrt{n}\rfloor +1}^n\neg \mathcal{E}_{2,k}  \right)  
	\leq \sum_{k = \lfloor \sqrt{n}\rfloor + 1}^n  \exp\left(2k\ln n\right)\cdot  \exp\left(-6k f(k)   \right)\leq \sum_{k = \lfloor \sqrt{n}\rfloor + 1}^n k^{-k} \leq  \exp(- \sqrt{n}) .
\end{equation}

We now treat events of the form $\mathcal{E}_{1,k}$, and thus we can assume that $s\leq k\leq \lfloor \sqrt{n} \rfloor $. 
Observe that for any fixed constant $d>0$ and sufficiently  large $n$ we have $\frac{n^{2/3}}{ k(\log k)^d} \geq s $  as $k\leq \sqrt{n}$.  Thus, by \Smul,  and \Slin we have
\begin{align*}f\left( \frac{n^{2/3}}{(\log k)^d} \right) &= f\left(  \frac{n^{2/3}}{ k(\log k)^d}   \cdot k \right)\\
	& \leq C\cdot  f\left(  \frac{n^{2/3}}{ k(\log k)^d} \right) \cdot f\left( k \right) \\
	 &\leq C^2\cdot    \left(\frac{n^{2/3}}{ k(\log k)^d}\right)^{1-\delta} \cdot f(k)\\
	 & \leq C^2\cdot  \frac{n^{2/3}}{ k(\log k)^d} \cdot f(k). \end{align*}
Similarly, by \Smul and \Slin, we have
\begin{align*}f(n)  &=  f\left(  \frac{n^{2/3}}{(\log k)^d}  \cdot n^{1/3}(\log k)^d  \right)\\ 
	&\leq C\cdot  f\left(  \frac{n^{2/3}}{(\log k)^d} \right) \cdot f\left(n^{1/3}(\log k)^d  \right) \\
	&\leq C^2\cdot  f\left(  \frac{n^{2/3}}{(\log k)^d}  \right)\cdot n^{(1-\delta)/3}(\log k)^{(1-\delta)\cdot d}.\end{align*} If we set $d = 1/\delta >0$ then the two bounds above give  
\begin{equation}\label{eq:subaddff}
	\frac{f(k)}{ f(n)}  
	\geq \frac{f\left(  \frac{n^{2/3}}{(\log k)^d}  \right) \cdot \frac{k (\log k)^d }{C^2n^{2/3}}}{C^2\cdot f\left(  \frac{n^{2/3}}{(\log k)^d}  \right)\cdot n^{(1-\delta)/3}(\log k)^{(1-\delta)\cdot d} } = \frac{k (\log k)^{\delta d} }{C^4n^{1-\delta/3}} 
	= \frac{k \log k }{C^4n}\cdot n^{\delta/3} 
	.\end{equation} Foreseeing the need for the constant $15$ later on, we now set \begin{equation}\label{eq:constc1} c_1  =e\cdot 15 \cdot C^4 \gamma/\delta .\end{equation} We now set $1+a:= \frac{c_1 k f(k)}{\mu\cdot \log k}$, which by \eqref{eq:subaddff}  satisfies
	\begin{equation}\label{eq:secondabdd}1+a= \frac{c_1 k f(k)}{\mu\cdot \log k} = \frac{2c_1nf(k)}{\gamma(k-1)f(n)\log k }\geq \frac{2c_1k }{\gamma (k-1) C^4 }\cdot n^{\delta/3}  > e\cdot n^{\delta/3} . \end{equation}   As before, Chernoff bound (\cref{lem:Chernoff}) with this $1+a$ gives 
\begin{align}\label{eq:chb2}
	\mathbb{P}\left(\xi>\frac{c_1 kf(k)}{\log k}\right)&\leq  \exp\left(-\frac{c_1 kf(k)}{\log k}\cdot\ln \frac{1+a}{e}  \right)\notag\\
	&\stackrel{\eqref{eq:secondabdd}}{\leq}   \exp\left(-\frac{c_1 kf(k)}{\log k}\cdot\frac{\delta }{3}\ln n  \right)\notag\\
		&\stackrel{(*)}{\leq}  \exp\left(- c_1 k \cdot\frac{\delta }{3}\ln n  \right)\notag\\
		&\stackrel{\eqref{eq:constc1}}{\leq}  \exp\left(-5 k  \ln n  \right)
	,\end{align}
where $(*)$ follows as $f(k)\geq \log k$ by \Slin. Thus, by \eqref{eq:nchoosek}, \eqref{eq:chb2}, and the union bound, 
 \begin{equation} \label{eq:union2}
 	\mathbb{P}\left(\bigcup_{k = s}^{\lfloor \sqrt{n}\rfloor } \neg \mathcal{E}_{1,k}  \right) \leq \sum_{k = s}^{\lfloor \sqrt{n}\rfloor }   \exp\left(2k\ln n\right)\cdot  \exp\left(-5 k \ln n   \right) \leq \sqrt{n}\cdot n^{-3s} \leq  n^{-5} .
 \end{equation} The result follows by taking $c = \max\{c_1,c_2, \binom{s}{2}\} $, \eqref{eq:events}, and the union bound over \eqref{eq:union1} and \eqref{eq:union2}. 
   \end{proof}
 We now use \cref{th:Gnp-sparsness} to prove \cref{lem:superfactorial}, which bounds the number of $cf$-good graphs from below. To prove \cref{lem:superfactorial}, it is convenient to switch to an alternative model of random graphs with a fixed number of edges. We let $G(n,m)$ to denote the uniform distribution on $n$-vertex graphs with $m$~edges. The following lemma allows us to transfer results from one graph model to another. 

\begin{lemma}\label{lem:transfer}
	Let $\mathcal{P}$ be any graph property (i.e., graph class) and $0\leq p\leq 1$ satisfy $p\binom{n}{2}\rightarrow \infty$ and $\binom{n}{2} -p\binom{n}{2}\rightarrow \infty$ and $m =\left\lceil p\binom{n}{2} \right\rceil $. Then, for $G_n \sim G(n,m)$ and $G_n' \sim G(n,p)$, we have  
	\[ \Pr{G_n\in \mathcal{P}} \leq 10 \sqrt{m}\cdot  \Pr{G_n' \in \mathcal{P}}. \]	
\end{lemma}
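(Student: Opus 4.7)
The plan is to use the standard coupling between $G(n,p)$ and $G(n,m)$. The key observation is that conditional on the event $\{|E(G_n')| = m\}$, the random graph $G_n' \sim G(n,p)$ is uniformly distributed over all $n$-vertex graphs with exactly $m$ edges; that is, it has exactly the distribution of $G_n \sim G(n,m)$. Since membership in $\mathcal{P}$ only depends on the graph,
\[
\Pr{G_n' \in \mathcal{P}} \geq \Pr{G_n' \in \mathcal{P},\ |E(G_n')| = m} = \Pr{G_n \in \mathcal{P}} \cdot \Pr{|E(G_n')| = m}.
\]
Rearranging, it suffices to prove the point-probability bound $\Pr{|E(G_n')| = m} \geq \frac{1}{10 \sqrt{m}}$ for all sufficiently large $n$.

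To establish this, I would observe that $|E(G_n')| \sim \mathrm{Bin}(N, p)$ with $N := \binom{n}{2}$, and $m = \lceil N p \rceil$ differs from the mean $Np$ by less than one. The plan is to apply a direct Stirling estimate to $\binom{N}{m} p^m (1-p)^{N-m}$. Using the standard asymptotic for the binomial mass function near the mean (valid because $Np \to \infty$ and $N(1-p) \to \infty$ by hypothesis), one obtains
\[
\Pr{|E(G_n')| = m} = (1 + o(1)) \cdot \frac{1}{\sqrt{2\pi N p (1-p)}}.
\]
Since $N p (1-p) \leq N p \leq m$, the denominator is at most $\sqrt{2 \pi m}$, and so for $n$ sufficiently large this probability is at least $\frac{1}{10 \sqrt{m}}$ (indeed, any constant strictly larger than $\sqrt{2\pi}$ works).

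The main technical obstacle is securing the explicit constant $10$ uniformly in $p$. The sub-case $p \leq 1/2$ is the natural one, where $p(1-p) \leq p$ directly yields $N p (1-p) \leq m$. For $p > 1/2$ one passes to the complementary graph: the complement $\overline{G_n'}$ is distributed as $G(n, 1-p)$, and the hypothesis that $\binom{n}{2} - p\binom{n}{2} \to \infty$ ensures $m' := N - m$ also grows, after which the same Stirling argument applies (and $m' \leq m$ when $p > 1/2$, so the bound in terms of $\sqrt{m}$ is only easier). In either case the local limit estimate produces the claimed inequality, completing the proof.
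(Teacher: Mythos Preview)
Your argument is correct and is exactly the standard proof: condition on the edge count and lower-bound the binomial point mass at its (rounded) mean via Stirling. The paper does not give its own proof but simply cites \cite[Lemma~3.2]{FriezeKaronski}, whose argument is precisely this one, so your approach matches. The case split on $p \le 1/2$ versus $p > 1/2$ is unnecessary, since $Np(1-p) \le Np \le m$ holds for every $p$, and the local-limit estimate is already uniform once you know $Np(1-p) \to \infty$ (which follows from the two hypotheses $Np \to \infty$ and $N(1-p) \to \infty$); but the extra case does no harm.
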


Lemma \ref{lem:transfer} follows by a~very minor adaption of \cite[Lemma 3.2]{FriezeKaronski}, the only difference is a~ceiling in the number of edges, which makes no difference in the proof.

 	\begin{lemma}\label{lem:superfactorial}
				Let $f : \Rrr \rightarrow \Rrr$ be $(\delta,C,s)$-\dec  for some constants $\delta \in (0,1)$, $C\geq 1$, and $s\geq 2$.   
 		Then, for any fixed $\gamma>1$, there exists some $c:=c(\gamma,\delta,C,s)>0$ such that for every $n \in \Nn$ there are at least  
 		$ 2^{(\gamma \delta/2-o(1))\cdot nf(n)\log n}$ many unlabeled $(cf)$-good $n$-vertex graphs.
 	\end{lemma}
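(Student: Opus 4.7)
The plan is to combine \cref{th:Gnp-sparsness} with \cref{lem:transfer} to lower-bound the number of \emph{labeled} $(cf)$-good graphs with a prescribed number of edges, and then pass to unlabeled graphs by dividing by~$n!$.

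Fix $\gamma > 1$, let $c := c(\delta,C,s,\gamma)$ be the constant from \cref{th:Gnp-sparsness}, set $p := \gamma f(n)/n$, and $m := \lceil p\binom{n}{2}\rceil$. Note that $m = \Theta(nf(n))$ and, since $\log n \leq f(n) \leq C n^{1-\delta}$, we have $p\binom{n}{2} \to \infty$ and $(1-p)\binom{n}{2} \to \infty$. By \cref{th:Gnp-sparsness}, if $G_n' \sim G(n,p)$ then $\Pr{G_n' \text{ is not } (cf)\text{-good}} \leq n^{-2}$. Applying \cref{lem:transfer} with the (monotone) property of being \emph{not} $(cf)$-good yields, for $G_n \sim G(n,m)$,
\[
\Pr{G_n \text{ is not } (cf)\text{-good}} \leq 10\sqrt{m}\cdot n^{-2} = o(1).
\]
Thus at least half of the $\binom{\binom{n}{2}}{m}$ labeled $n$-vertex graphs with exactly $m$ edges are $(cf)$-good, for $n$ large.

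Since each unlabeled graph corresponds to at most $n!$ labeled graphs, the number of \emph{unlabeled} $(cf)$-good $n$-vertex graphs is at least
\[
\frac{1}{2 \cdot n!}\binom{\binom{n}{2}}{m}.
\]
It remains to show that the logarithm of this quantity is $(\gamma\delta/2 - o(1))\cdot n f(n)\log n$. Using $\binom{N}{m} \geq (N/m)^m$, we get
\[
\log\binom{\binom{n}{2}}{m} \geq m \log\!\left(\frac{n(n-1)}{2m}\right) \geq m\bigl(\log n - \log(\gamma f(n)) - O(1)\bigr).
\]
By \Slin we have $\log f(n) \leq (1-\delta)\log n + O(1)$, so $\log n - \log(\gamma f(n)) - O(1) \geq \delta \log n - O(1)$. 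Therefore
\[
\log\binom{\binom{n}{2}}{m} \geq m \cdot \delta\log n\cdot (1-o(1)) = \frac{\gamma}{2}\,n f(n)\cdot \delta\log n\cdot (1-o(1)).
\]
Finally, $\log(n!) = n\log n + O(n)$, and since $f(n) \geq \log n$ this term is absorbed into the $o(nf(n)\log n)$ error (as $n\log n = o(nf(n)\log n)$ whenever $f(n)\log n \to \infty$, which holds because $f(n)\geq \log n$). Combining gives the stated lower bound $2^{(\gamma\delta/2 - o(1))\cdot n f(n)\log n}$.

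The only subtle point is verifying that the logarithmic factors from $\log(2\cdot n!)$ and from the $-\log(\gamma f(n))$ term are genuinely absorbed by the $o(nf(n)\log n)$ slack. The first is immediate because $f(n) \to \infty$; the second relies crucially on the upper bound $f(n) \leq C n^{1-\delta}$ of \Slin, which is what produces the leading constant $\gamma\delta/2$ as opposed to a weaker $\gamma/2$ factor.
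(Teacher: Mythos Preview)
Your proposal is correct and follows essentially the same approach as the paper: apply \cref{th:Gnp-sparsness} and \cref{lem:transfer} to show that a $(1-o(1))$-fraction of labeled graphs with $m = \lceil \gamma f(n)\binom{n}{2}/n\rceil$ edges are $(cf)$-good, lower-bound $\binom{\binom{n}{2}}{m}$ via $(N/m)^m$, divide by $n!$, and use the \Slin bound $f(n)\leq Cn^{1-\delta}$ to extract the factor $\delta$ in the exponent. The only cosmetic difference is that you phrase the transfer step in terms of the complementary property and keep an explicit factor $\tfrac12$ rather than $(1-o(1))$, which makes no difference to the final bound.
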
 
 	\begin{proof}Let $m:= \big\lceil \tfrac{\gamma(n-1)f(n)}{2}\big\rceil$ and $G_n \sim G\big(n,m \big)$. Observe that by \cref{th:Gnp-sparsness} and \cref{lem:transfer}, there exists some fixed $c >0$ such that for sufficiently large $n$   
 		\begin{equation}\label{eq:probgood}
 			\Pr{G_n\text{ is $(cf)$-good}} \geq 1-10 \sqrt{\big\lceil \tfrac{\gamma(n-1)f(n)}{2}\big\rceil}\cdot n^{-2} = 1-o(1).
 		\end{equation}
                The number of labeled graphs in the support of $G\big(n,m\big)$ is \[\binom{\binom{n}{2}}{m}=\binom{\binom{n}{2}}{\left\lceil \tfrac{\gamma (n-1)f(n)}{2}\right\rceil }\geq\left(\frac{n}{\gamma  f(n)}\right)^{\tfrac{\gamma (n-1)f(n)}{2}} =  2^{\frac{\gamma }{2}\cdot (n-1)f(n) \cdot (\log n  - \log (\gamma f(n)))} . \]
                            By \eqref{eq:probgood}, a~$1-o(1)$ fraction of these labeled graphs are $(cf)$-good.
 	Furthermore, there are at most $n!\leq n^n$ labelings of a~given unlabeled graph. Thus, the number of unlabeled $n$-vertex  $(cf)$-good graphs is bounded from below by
 	\begin{align*}  (1-o(1))\cdot \frac{1}{n^n}\cdot  2^{\frac{\gamma }{2}\cdot (n-1)f(n)\cdot (\log n  - \log (\gamma f(n)))} &=    2^{\frac{\gamma }{2}\cdot nf(n)\cdot (\log n  - \log (  f(n)) - O(1))}\\  
 		&\geq    2^{\frac{\gamma }{2}\cdot nf(n)\cdot (\log n  - (1-\delta) \log(n) - O(1))}\\  
&=  		2^{(\delta \gamma /2-o(1))\cdot n f(n) \log n},
 	        \end{align*}as claimed, since $\log n \leq f(n)\leq Cn^{1-\delta} $ by \Slin. 
 \end{proof}

 	\section{Tight bounds on labeling schemes for monotone classes}
 	\label{sec:monotone}
 We begin in \cref{sec:general-framework} with a lemma which is useful for bounding the speed when constructing monotone classes with no implicit representation. This is then used to prove our lower bound in \cref{subsec:lower}. Finally, in \cref{sec:monotone-upper} we give a matching upper bound on labeling schemes for monotone classes, this follows from \cite{KNR92} and included mainly for completeness.

 	\subsection{Construction of monotone classes}
 	\label{sec:general-framework}
 We begin with a lemma showing that, for a decent function $f$, we can create monotone classes from the union of many $f$-good graphs and still maintain control over the speed. The proof follows the broad idea of \cite[Claim 3.1]{HH22}.

 	\begin{lemma}\label{lem:good-tiny} 
			Let $f : \Rrr \rightarrow \Rrr$ be $(\delta,C,s)$-\dec  for some constants $\delta \in (0,1)$, $C\geq 1$, and $s\geq 2$.  
		Let $c > 0$ be a constant, and, for every $n \in \Nn$, let $M_n$ be any set of $(cf)$-good unlabeled $n$-vertex graphs satisfying 
		$|M_n| \leq  \big\lceil 2^{\sqrt{nf(n)}} \big\rceil$.
 		Then the speed of $\Xc := \mon(\cup_{n \in \Nn} M_n)$ is $2^{O(nf(n))}$.
 	\end{lemma}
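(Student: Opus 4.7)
The plan is to bound $|\Xc_N|$ for large $N$ by counting labeled $N$-vertex subgraphs of the graphs $H \in \bigcup_{n \geq N} M_n$, and to split the index $n$ into two regimes determined by how $N$ compares to $\sqrt{n}$, mirroring the two clauses in \cref{def:good}. Every $G \in \Xc_N$ is isomorphic to a subgraph of some $H \in M_n$ with $n \geq N$, so
\[
|\Xc_N| \;\leq\; \sum_{n = N}^{N^2 - 1} |M_n| \cdot S(N,n) \;+\; \bigl|\{G \text{ on } [N]\,:\,G \text{ has at most } cNf(N)/\log N \text{ edges}\}\bigr|,
\]
where $S(N,n)$ denotes the maximum, over $H\in M_n$, of the number of labeled $N$-vertex subgraphs of $H$. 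The second term absorbs all contributions from $n \geq N^2$, using that in this regime $N \leq \sqrt{n}$ and so $(cf)$-goodness forces every $N$-vertex subgraph of any $H \in M_n$ to have at most $cNf(N)/\log N$ edges.

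For the second term, a standard entropy bound gives $\binom{N^2}{k} \leq (eN^2/k)^k \leq (N^2)^k = 2^{2k\log N}$ with $k = cNf(N)/\log N$; the $\log N$ denominator cancels with the $\log N$ exponent to yield $2^{2cNf(N)}$, and summing over $j \leq k$ multiplies by at most $k+1$, which is absorbed since $f(N) \geq \log N$ by \Slin. So this term is $2^{O(Nf(N))}$.

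For the first term I count per graph $H$. In the range $N \leq n < N^2$, any $N$-vertex subset $S$ of $V(H)$ satisfies $N > \sqrt{n}$, so $(cf)$-goodness only gives $e(H[S]) \leq cNf(N)$. Bounding labeled $N$-vertex subgraphs of $H$ by (injections $[N] \to V(H)$) times (edge subsets of the image) yields $S(N,n) \leq n^N \cdot 2^{cNf(N)}$. Since $n < N^2$ and $f(N) \geq \log N$, the moderate-growth condition gives $n^N \leq N^{2N} = 2^{2N\log N} \leq 2^{2Nf(N)}$, hence $S(N,n) \leq 2^{O(Nf(N))}$ uniformly in $n$. It remains to control $|M_n| \leq 2^{\sqrt{nf(n)}} + 1$ and sum.

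The main obstacle is this last estimate on $|M_n|$: we must show $\sqrt{nf(n)} = O(Nf(N))$ for all $n < N^2$, and it is here that \Smul is used essentially. Applying it once (valid once $N \geq s$) gives $f(N^2) = f(N \cdot N) \leq C f(N)^2$, and since $f$ is non-decreasing, $f(n) \leq f(N^2) \leq C f(N)^2$. Therefore
\[
\sqrt{nf(n)} \;\leq\; \sqrt{N^2 \cdot C\, f(N)^2} \;=\; \sqrt{C}\, Nf(N),
\]
so $|M_n| \leq 2^{O(Nf(N))}$ for every such $n$. Multiplying by $S(N,n)$ and summing over the at most $N^2$ values of $n$ introduces a polynomial-in-$N$ factor that is absorbed into $2^{O(Nf(N))}$. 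Boundary cases (small $N$ or $n$ below $s$) contribute only a constant depending on $s,c,C,\delta$, which is absorbed into the big-$O$ constant. This yields $|\Xc_N| = 2^{O(Nf(N))}$ as required.
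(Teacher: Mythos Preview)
Your proof is correct and follows essentially the same approach as the paper: the same two-regime split at $n=N^2$, the same use of the $(cf)$-good edge bounds in each regime, and the same application of sub-multiplicativity to control $|M_n|$ via $f(n)\le f(N^2)\le Cf(N)^2$. The only difference is organizational: the paper first bounds the speed of the hereditary closure $\Yc=\her(\cup_n M_n)$ and then passes to $\Xc=\mon(\Yc)$ by multiplying by $2^{cNf(N)}$, whereas you fold this multiplication directly into your per-graph count $S(N,n)\le n^N\cdot 2^{cNf(N)}$; the resulting estimates are identical.
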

 	\begin{proof}
 	Let $\Yc := \her(\cup_{n \in \Nn} M_n)$. Note that $\Xc = \mon(\Yc)$.
 	We first estimate the speed of $\Yc$.
 	For an $n$-vertex graph $G\in \Yc$, let $N$ be the smallest integer such that $G$ is an induced subgraph of a graph $H \in M_N$. We split the proof over two cases: $(i)$: $N \geq n^2$, and $(ii)$: $N < n^2$. 
 	\begin{labeling}{\textit{Case (ii)}:}
 		\item[\textit{Case (i)}:] Since $H$ is a $(cf)$-good $N$-vertex graph and $G$ is its $n$-vertex induced subgraph, where $n \leq \sqrt{N}$, it follows from  \cref{def:good} that $G$ must have at most $g(n): = cnf(n)/\log n$ many edges. The number of such graphs is at most  
 		\[
 		\binom{\binom{n}{2}}{g(n) } \leq \left( \frac{n^2 e}{ g(n)} \right)^{g(n)}= 	2^{g(n)\cdot \log \frac{n^2e}{g(n)}} = 2^{c\frac{nf(n)}{\log n}\cdot \log \frac{n^2e}{g(n)}} = 2^{O(nf(n))},
 		\] 
 		and so $\Yc$ contains $2^{O(nf(n))}$ many $n$-vertex labeled graphs 
 		each of which is an induced subgraph of a graph in $M_N$ for some $N$ with $n \leq \sqrt{N}$.
 		\item[\textit{Case (ii)}:] For this case, we simply use the fact that any
 		$H \in M_N$ has at most $N^n$ many $n$-vertex induced subgraphs.
 		Thus, the number of $n$-vertex labeled graphs in $\Yc$
 		each of which is an induced subgraph of a graph in $M_N$ for some $N$ with $N < n^2$ is bounded from above by
 		\begin{align*}
 			n! \cdot \sum_{N=n}^{n^{2}} N^n\cdot  |M_N| 
 			& \leq 
 			n! \cdot \sum_{N=n}^{n^{2}} N^n\cdot  \left\lceil 2^{\sqrt{Nf(N)}} \right\rceil  \\
 			& \leq 
 			n! \cdot n^{2} \cdot (n^{2})^n \cdot\left\lceil  2^{\sqrt{n^2f(n^{2})}}\right\rceil \\
 			& \leq 2^{O(n \log n)} \cdot\left\lceil  2^{\sqrt{C}nf(n)}\right\rceil\\
 			&=  2^{O(nf(n))},
 		\end{align*}
 		where in the last inequality we used \Smul of $f$, and in the final equality we used the fact that $f(x)\geq \log x$.
 	\end{labeling} 
 	 Thus, $|\Yc_n| = 2^{O(nf(n))}$. 
 	 Now, since every $n$-vertex labeled graph in $\Xc$ is a subgraph of an $n$-vertex labeled
 	 graph in $\Yc$, and, due to $(cf)$-goodness, every graph in $\Yc_n$
 	 has at most $2^{cnf(n)}$ $n$-vertex subgraphs, we conclude that
 	 $|\Xc_n| \leq |\Yc_n| \cdot 2^{cnf(n)} = 2^{O(nf(n))}$.
 	\end{proof}

	\subsection{Lower bound}\label{subsec:lower}

		We can now show the main result of the paper, which we recall for convenience.
	
	{\renewcommand{\thetheorem}{\ref{th:main}}
		\begin{theorem}
			\lowerbound	
	\end{theorem}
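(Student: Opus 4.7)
The plan is to prove the theorem by a counting argument in the spirit of \cite{HH22}, assembling the ingredients developed earlier. Fix constants $(\delta,C,s)$ witnessing that $f$ is \dec, set $\gamma := 4/\delta$, and let $c = c(\gamma,\delta,C,s)>0$ be the constant supplied by \cref{lem:superfactorial}. For each $n$, let $\mathcal{G}_n$ denote the collection of unlabeled $(cf)$-good $n$-vertex graphs; by \cref{lem:superfactorial}, $|\mathcal{G}_n| \geq 2^{(2-o(1))\,nf(n)\log n}$ for large $n$. Set $k_n := \lceil 2^{\sqrt{nf(n)}} \rceil$ and $u_n := 2^{f(n)\log n}$.

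The heart of the argument is to compare, at a fixed large $n$, two counts of $k_n$-element subsets of $\mathcal{G}_n$. On the one hand, since $\log k_n = O(\sqrt{nf(n)}) = o(nf(n)\log n)$, the total number of such subsets is at least
\[
\binom{|\mathcal{G}_n|}{k_n} \;\geq\; \left(\frac{|\mathcal{G}_n|}{k_n}\right)^{k_n} \;\geq\; 2^{(2-o(1))\,k_n\, nf(n)\log n}.
\]
On the other hand, any $k_n$-element set of unlabeled $n$-vertex graphs representable by a universal graph on at most $u_n$ vertices is determined by the choice of the universal graph (at most $2^{\binom{u_n}{2}} \leq 2^{u_n^2}$ options) together with, for each of its $k_n$ members, an $n$-vertex induced subgraph (at most $\binom{u_n}{n}^{k_n} \leq u_n^{nk_n}$ options). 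Hence this count is at most
\[
2^{u_n^2 + nk_n\log u_n} \;=\; 2^{\,2^{2f(n)\log n}\;+\;k_n\,nf(n)\log n}.
\]
The key numerical check is that $2^{2f(n)\log n}$ is negligible compared to $k_n\,nf(n)\log n$: since $f(n) \leq Cn^{1-\delta}$ by \Slin, we get $2f(n)\log n \ll \sqrt{nf(n)} \leq \log k_n$ for large $n$, so the representable-set count is $2^{(1+o(1))\,k_n\,nf(n)\log n}$.

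Comparing the two bounds, for all sufficiently large $n$ there exists a subset $M_n^\star \subseteq \mathcal{G}_n$ of cardinality $k_n$ that is not representable by any universal graph on at most $u_n$ vertices. Fix such an $n$ and such an $M_n^\star$. For each $m\neq n$, arbitrarily choose a set $M_m$ of at most $k_m$ unlabeled $(cf)$-good $m$-vertex graphs (for example, edgeless graphs when $\mathcal{G}_m$ is not yet large enough). Set $M_n := M_n^\star$ and define
\[
\mathcal{X} \;:=\; \mon\!\left(\bigcup_{m\in\mathbb{N}} M_m\right).
\]
By \cref{lem:good-tiny}, the speed of $\mathcal{X}$ satisfies $|\mathcal{X}_m| = 2^{O(mf(m))}$ for every $m$. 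Moreover, every graph in $M_n^\star$ is an $n$-vertex graph in $\mathcal{X}$, hence belongs to $\mathcal{X}_n^u$; consequently, if $\mathcal{X}$ admitted a universal-graph sequence $(U_m)_m$ with $|V(U_n)| \leq u_n$, then $U_n$ would in particular represent $M_n^\star$, contradicting the choice of $M_n^\star$. Therefore $\mathcal{X}$ admits no universal graph of size at most $2^{f(n)\log n}$, which is the desired conclusion.

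The main obstacle is the quantitative comparison of the two counts: the choice $k_n = \lceil 2^{\sqrt{nf(n)}} \rceil$ is what makes the doubly-exponential term $2^{u_n^2}$ from enumerating universal graphs disappear into the $o(1)$ in the exponent, while simultaneously remaining small enough that the monotone closure $\mathcal{X}$ has speed $2^{O(nf(n))}$ via \cref{lem:good-tiny}. The \Slin part of \dec enters explicitly in the numerical check $2f(n)\log n \ll \sqrt{nf(n)}$, whereas \Smul has already been consumed inside \cref{lem:superfactorial,lem:good-tiny}.
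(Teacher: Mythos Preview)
Your proof is correct and follows essentially the same counting argument as the paper, with the same choices of $\gamma=4/\delta$, $k_n=\lceil 2^{\sqrt{nf(n)}}\rceil$, $u_n=2^{f(n)\log n}$, and the same appeals to \cref{lem:superfactorial} and \cref{lem:good-tiny}. The only cosmetic difference is that you fix a single sufficiently large $n$ and fill in the other levels trivially, whereas the paper takes a bad collection $M_n$ at every sufficiently large $n$ simultaneously; both choices yield the theorem as stated.
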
}
	\addtocounter{theorem}{-1}
	\begin{proof}	
By assumption $f : \Rrr \rightarrow \Rrr$ is $(\delta,C,s)$-\dec  for some constants $\delta \in (0,1)$, $C\geq 1$, and $s\geq 2$. 	We will construct a monotone class (via the probabilistic method) with the speed $2^{O(n f(n))}$ that does not admit a universal graph of size $u_n := 2^{ f(n)\log n}$.  Fix $\gamma := 4/\delta >1$ and let $c:=c(\gamma, \delta,C,s)>0$ be the satisfying constant from \cref{th:Gnp-sparsness} corresponding to this choice of $\gamma$. Let $k_n := \left\lceil 2^{\sqrt{nf(n)}} \right\rceil$.
		
		The number of distinct $u_n$-vertex graphs is at most $2^{u_n^2}$ and 
		the number of $n$-vertex induced subgraphs of a~fixed $u_n$-vertex graph 
		is at most $\binom{u_n}{n}$.
		Hence the number of collections of $k_n$~graphs on $n$ vertices that are
		induced subgraphs of a~$u_n$-vertex (universal) graph is at most 
		\begin{equation}\label{eq:cols} 
			2^{u_n^2}\cdot \binom{\binom{u_n}{n}}{k_n}\leq  2^{u_n^2}\cdot u_n^{k_n \cdot n}.  
		\end{equation}

		On the other hand, from \cref{lem:superfactorial}, the number of different collections of $n$-vertex $(cf)$-good graphs of cardinality $k_n$ is at least 
		\begin{equation}\label{eq:unlabbelled}
			\binom{2^{(\gamma \delta /2-o(1))\cdot n f(n) \log n}}{k_n} 	\geq \left(\frac{2^{(\gamma \delta /2-o(1))\cdot n f(n) \log n}}{k_n} \right)^{k_n} = 2^{k_n \cdot (\gamma \delta/2-o(1))\cdot n f(n) \log n},
		\end{equation} 
		as $\log k_n = O(\sqrt{nf(n)})  = o(n f(n)  \log n)$. 
		By taking logarithms, we can see
		that for sufficiently large~$n$ the upper bound \eqref{eq:cols} is smaller than the lower bound \eqref{eq:unlabbelled}. In particular, taking the logarithm of \eqref{eq:cols} gives 
		
		\begin{align*}
			\log\left(2^{u_n^2}\cdot u_n^{k_n \cdot n} \right) 
			& = u_n^2 + k_n\cdot n  \log u_n \notag \\
			&= 2^{2 f(n) \log n} + k_n \cdot n f(n)\log n\\
			&= (1 +o(1))\cdot k_n \cdot n f(n) \log n, 
			\intertext{as $k_n:=\left\lceil 2^{\sqrt{nf(n)}} \right\rceil = \omega(2^{2 f(n)\log n})$.  
			However, since $\gamma =4/\delta$, the logarithm of \eqref{eq:unlabbelled} is}
			\log\left(2^{k_n \cdot (\gamma \delta/2-o(1))\cdot n f(n) \log n}  \right) &= k_n \cdot (\gamma \delta/2-o(1))\cdot n f(n) \log n\\
			&=  (2-o(1))\cdot k_n \cdot n f(n) \log n.
		\end{align*}  
		
		Thus, for any sufficiently large $n$, there exists a~collection $M_n$ of $k_n$ $(cf)$-good $n$-vertex graphs that are not representable by any universal graph
		of size at most $u_n=2^{f(n) \log n}$.	 
		Consequently, by \cref{lem:good-tiny}, the speed of $\Xc := \mon(\cup_n M_n)$ is $|\Xc_n| = 2^{O(nf(n))}$ and
		$\Xc$ does not admit a universal graph of size at most $2^{ f(n)\log n}$.
	\end{proof}

	\subsection{Upper bound}
	\label{sec:monotone-upper}

	In this section we prove the following upper bound on labeling schemes for monotone classes.
		
	{\renewcommand{\theproposition}{\ref{lem:monotone-factorial}}
		\begin{proposition}
			\monofactorial
	\end{proposition}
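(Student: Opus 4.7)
My plan is to prove that the speed bound $|\Xc_n| = 2^{O(nf(n))}$ together with monotonicity forces every graph in $\Xc_n$ to be $O(f(n))$-degenerate, after which the claim follows immediately from the standard scheme in \cref{lem:degenlabel}.

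Step one is to turn the speed bound into an edge-count bound on individual graphs. Fix a constant $C$ with $|\Xc_k| \leq 2^{Ckf(k)}$ for all sufficiently large $k$. If $G \in \Xc_k$ has $m$ edges, then monotonicity (closure under edge deletion) means every one of the $2^m$ labeled spanning subgraphs of $G$ lies in $\Xc_k$. These subgraphs are pairwise distinct as labeled graphs, so $2^m \leq |\Xc_k| \leq 2^{Ckf(k)}$, yielding $e(G) \leq Ckf(k)$.

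Step two is to promote this to a degeneracy bound. Let $G \in \Xc_n$ and let $H$ be any subgraph of $G$ on $k$ vertices. By monotonicity, $H$ lies (up to relabeling its vertex set to $[k]$) in $\Xc_k$, so step one gives $e(H) \leq Ckf(k) \leq Ckf(n)$ since $f$ is non-decreasing. Thus $H$ has average degree at most $2Cf(n)$ and hence contains a vertex of degree at most $2Cf(n)$. Therefore every $G \in \Xc_n$ is $\lfloor 2Cf(n)\rfloor$-degenerate. (Finitely many small values of $k$ for which the asymptotic speed bound may not apply can be absorbed by enlarging the constant, since each $\Xc_k$ is finite.)

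Finally, \cref{lem:degenlabel} applied to this degeneracy bound produces an adjacency labeling scheme of size $(\lfloor 2Cf(n)\rfloor + 1)\lceil \log n \rceil = O(f(n)\log n)$, as required. There is no real obstacle: the argument is a short chain using monotonicity twice, first on edge-subsets of a single graph and then on induced subgraphs, followed by a black-box application of the degenerate-graph labeling scheme.
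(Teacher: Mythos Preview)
Your proposal is correct and follows essentially the same argument as the paper: use monotonicity to convert the speed bound into an edge bound on every $k$-vertex graph in $\Xc$, then use the non-decreasing property of $f$ to conclude $2Cf(n)$-degeneracy of each $G\in\Xc_n$, and finish with \cref{lem:degenlabel}. The only cosmetic difference is that you are explicit about absorbing the finitely many small $k$ into the constant, which the paper glosses over.
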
}
	\addtocounter{proposition}{-1}
	
	\begin{proof}
			Let $\Xc$ be a~monotone class with at most $2^{Cnf(n)}$ labeled $n$-vertex graphs for every $n$.
			If an $n$-vertex graph $G \in \Xc$ has $m$ edges, then $\Xc$ contains at least $2^m$ labeled $n$-vertex graphs, as every subgraph of $G$ also belongs to $\Xc$ due to monotonicity.
			
			This implies that every $n$-vertex graph $G$ in $\Xc$ contains at most $C n f(n)$ edges, and hence, has a vertex of degree at most $2C f(n)$. Due to monotonicity of $f$, the same is true for every subgraph of $G$. Indeed, if $H$ is a $k$-vertex subgraph
			of $G$, then, since $H$ belongs to $\Xc$, the number of edges in $H$ is at most $C k f(k) \leq C k f(n)$, and therefore
			$H$ has a vertex of degree at most $2C f(n)$.	
			Thus, every $n$-vertex graph in $\Xc$ is $2C f(n)$-degenerate, and \cref{lem:degenlabel} implies that $\Xc$ admits a~$(2C f(n)+1)\cdot\lceil \log n\rceil$-bit labeling scheme.\end{proof}

	\subsection{Complexity of monotone classes}\label{sec:complexity}
	The following result shows that monotone classes are complex in the sense that they cannot be ``described'' by even a countable number of classes of a slightly larger speed. The proof of this theorem follows the exact same idea as \cite[Lemma 2.4]{Chandoo23}, also see \cite[Theorem 1.2]{BDSZZ23} for the proof of a similar theorem in the context of small classes. 
	
	{\renewcommand{\thetheorem}{\ref{Thm:complex}}
		\begin{theorem}
			\complex
	\end{theorem}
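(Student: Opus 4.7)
The plan is to adapt the probabilistic construction from the proof of \cref{th:main} and combine it with a standard diagonalization over the countable family $\mathbb{X}$. Enumerate $\mathbb{X} = \{\mathcal{D}_1, \mathcal{D}_2, \ldots\}$, fix $\gamma := 4/\delta$ where $\delta$ is the exponent witnessing that $f$ is \dec, and let $c := c(\gamma,\delta,C,s)$ be the constant provided by \cref{lem:superfactorial}. Set $k_n := \lceil 2^{\sqrt{nf(n)}} \rceil$ as before. The target class will be of the form $\mathcal{X} := \mon(\cup_{n \in \Nn} M_n)$, where each $M_n$ is a set of $(cf)$-good unlabeled $n$-vertex graphs with $|M_n| \le k_n$; \cref{lem:good-tiny} then automatically guarantees $|\mathcal{X}_n| = 2^{O(nf(n))}$, so the whole task reduces to choosing the $M_n$ so that $\mathcal{X}$ escapes every $\mathcal{D}_i$.

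The core counting step is the same as in the proof of \cref{th:main}. Since every graph class is closed under isomorphism, the hypothesis $|(\mathcal{D}_i)_n| \le 2^{nf(n)\log n}$ also bounds the number of unlabeled $n$-vertex members of $\mathcal{D}_i$ by the same quantity, so the number of $k_n$-subsets of these is at most $\binom{2^{nf(n)\log n}}{k_n} \le 2^{k_n\cdot nf(n)\log n}$. On the other hand, \cref{lem:superfactorial} with $\gamma=4/\delta$ yields at least $2^{(2-o(1))\cdot nf(n)\log n}$ unlabeled $(cf)$-good $n$-vertex graphs, so the total number of $k_n$-collections of such graphs is at least $2^{(2-o(1))\cdot nf(n)\log n \cdot k_n}$, strictly dominating the previous bound for all sufficiently large $n$. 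Hence for each fixed $i$ and every sufficiently large $n$, there must exist a collection $M$ of $k_n$ many $(cf)$-good unlabeled $n$-vertex graphs containing at least one member not in $\mathcal{D}_i$.

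Now diagonalize: pick a strictly increasing sequence $n_1 < n_2 < \cdots$ with each $n_i$ large enough that the displayed counting inequality applies to $\mathcal{D}_i$, and for each $i$ select a collection $M_{n_i}$ of size $k_{n_i}$ together with a witness $H_i \in M_{n_i} \setminus \mathcal{D}_i$. Setting $M_n := \emptyset$ for $n \notin \{n_1,n_2,\ldots\}$ and $\mathcal{X} := \mon(\cup_{n \in \Nn} M_n)$, \cref{lem:good-tiny} delivers $|\mathcal{X}_n| = 2^{O(nf(n))}$, while each $H_i$ lies in $\mathcal{X} \setminus \mathcal{D}_i$, certifying that no $\mathcal{D}_i \in \mathbb{X}$ contains $\mathcal{X}$.

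I expect no substantial new obstacle: the super-exponential abundance of $(cf)$-good graphs supplied by \cref{lem:superfactorial}, relative to the speed cap on the $\mathcal{D}_i$, is precisely the ingredient already exploited in \cref{th:main}, and countability of $\mathbb{X}$ lets one-at-a-time avoidance of a single $\mathcal{D}_i$ be assembled into simultaneous avoidance of all of them by a single monotone class. The only mild bookkeeping point is to ensure that the speed bound on $\mathcal{D}_i$ translates to the same bound on its number of unlabeled $n$-vertex graphs, which is immediate because each isomorphism class has at least one labeled representative on $[n]$.
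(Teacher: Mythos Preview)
Your proposal is correct and follows essentially the same approach as the paper: both arguments count $k_n$-collections of $(cf)$-good graphs against $k_n$-subsets of any single $\mathcal{D}_i$, observe that the former strictly dominate for large $n$, and then diagonalize over the countable family. The only cosmetic difference is that the paper diagonalizes by assigning to the $n$-th class $\mathcal{D}^n$ a collection of graphs on $n+N_0$ vertices, whereas you pick a strictly increasing sequence $n_1<n_2<\cdots$; both schemes are standard and equivalent here.
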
}
	\addtocounter{theorem}{-1}

	\begin{proof}

		By assumption $f : \Rrr \rightarrow \Rrr$ is $(\delta,C,s)$-\dec  for some constants $\delta \in (0,1)$, $C\geq 1$, and $s\geq 2$. By \cref{lem:superfactorial} there exists some $c:=c(\delta,C,s)>0$ such that if $\mathcal{G}_n$ is the number of unlabeled $(cf)$-good $n$-vertex graphs, then $|\mathcal{G}_n|\geq 2^{(2-o(1))\cdot nf(n)\log n}$ for every $n \in \Nn$. Let $k_n:=\big\lceil 2^{\sqrt{nf(n)}} \big\rceil$. Then, by \cref{lem:good-tiny}, for every $n \in \Nn$ and $M_n \subseteq \mathcal{G}_n$  satisfying $|M_n| \leq k_n$,  the speed of $ \Xc := \mon(\cup_{n \in \Nn} M_n)$ is $2^{O(nf(n))}$. Thus if we wish to build such a class $\Xc$ there are \begin{equation}\label{eq:numsets}\binom{|\mathcal{G}_n|}{k_n} \geq \left(\frac{|\mathcal{G}_n|}{k_n}\right)^{k_n}  \geq 2^{(2-o(1))\cdot k_n\cdot  nf(n)\log n} ,\end{equation} ways of selecting the set $ M_n\subseteq \mathcal{G}_n$.    
		
		Let $\mathbb{X}=(\mathcal{D}^i)_{i\in \mathbb{N}}$ be any countable collection of classes, satisfying $|\mathcal{D}^i_n|\leq 2^{nf(n)\log n} $ for each $n\in \Nn$ and $i\in \Nn$. Any class $\mathcal{D}^i \in \mathbb{X}$ contains at most $2^{k_n\cdot nf(n)\log n}$ different sets of $n$-vertex graphs with size $k_n$. By \eqref{eq:numsets}, there is some constant $N_0$ such that for all $n\geq N_0$  this is less than the number of choices of sets $M_n \subseteq \mathcal{G}_n$ with size $k_n$. Thus, given any such set $\mathbb{X}$, for each $n\in \Nn$ we can take some $M_n\subseteq \mathcal{G}_{n+N_0}$ such that $M_n\not\subseteq \mathcal{D}^n$ and thus $\Xc \notin \mathbb{X}$.\end{proof}

\iftoggle{anonymous}{
}{%
	\bigskip
	\textbf{Acknowledgments.}
	We are grateful to Nathan Harms for valuable feedback on the early version of this paper. 
	This work has been supported by Research England funding to enhance research culture, by the Royal Society (IES\textbackslash R1\textbackslash 231083), by the ANR projects TWIN-WIDTH (ANR-21-CE48-0014) and Digraphs (ANR-19-CE48-0013), and also the EPSRC project EP/T004878/1: Multilayer Algorithmics to Leverage Graph Structure.
}

\bibliographystyle{alpha}
\bibliography{biblio}
	
\end{document}